\subjclass[2010]{20-08; 12F10; 16T05; 20B05.}
\keywords{Hopf--Galois structures; skew bracoids; computational group theory}
\title[Hopf--Galois structures and skew bracoids of small degree]{Constructing Hopf--Galois structures and skew bracoids of small degree}
\author{Andrew Darlington}
\date{\today}
\address{Department of Mathematics and Statistics, Faculty of Environment, Science and Economy, University of Exeter, Exeter EX4 QFU. UK.}
\address{Department of Mathematics, Vrije Universiteit Brussel, Pleinlaan 2,
1050 Brussel, Belgium}
\email{andrew.darlington@vub.be}
\author{E.A.\  O'Brien}
\address{Department of Mathematics, University of Auckland, Private
  Bag 92019, Auckland, New Zealand} 
\email{e.obrien@auckland.ac.nz}
\thanks{For the purpose of open access, the authors have applied a CC BY 
public copyright license to any Author Accepted Manuscript version arising.
\newline
\indent Data Access Statement: Data sharing is not applicable to this article 
as no datasets were generated or analysed in this research.
}
\newif\ifversion
\def\bign#1{\mathclose{\hbox{$\left#1\vbox to8.5\p@{}\right.\n@space$}}\mathopen{}}
\def\Bign#1{\mathclose{\hbox{$\left#1\vbox to11.5\p@{}\right.\n@space$}}\mathopen{}}
\newtheorem{theorem}{Theorem}[section]
\newtheorem{proposition}[theorem]{Proposition}
\newtheorem{lemma}[theorem]{Lemma}
\newtheorem{corollary}[theorem]{Corollary}
\newtheorem{conjecture}[theorem]{Conjecture}
\theoremstyle{definition}
\newtheorem{remark}[theorem]{Remark}
\newtheorem{example}[theorem]{Example}
\newtheorem{definition}[theorem]{Definition}
\newcommand{\Gal}{\mathrm{Gal}}
\newcommand{\Hol}{\mathrm{Hol}}
\newcommand{\Aut}{\mathrm{Aut}}
\newcommand{\Perm}{\mathrm{Perm}}
\newcommand{\Stab}{\mathrm{Stab}}
\newcommand{\C}{\mathrm{C}}
\newcommand{\D}{\mathrm{D}}
\begin{document}
\bibliographystyle{amsalpha}
\begin{abstract}
Using the fact that Hopf-Galois structures on separable
extensions and skew bracoids are both intrinsically connected to
transitive subgroups of the holomorph of a finite group, 
we present algorithms to classify and enumerate these objects for 
small degree, and apply them to obtain 
significant extensions to existing results.
We also explore the classifications of these structures of degree $2pq$, 
where $p$ and $q$ are distinct odd primes. We conclude with some 
enumeration-inspired observations and a conjecture.
\end{abstract}

\maketitle

\section{Introduction}
Hopf--Galois theory and skew bracoids are two, initially seemingly
disconnected, areas of algebra which have important applications
in mathematics and physics.

Hopf--Galois structures were introduced by Chase and Sweedler
\cite{CS69}; they investigated and generalised what it means
for a field extension $L/K$ to have Galois group $J$. We 
associate to $L/K$, which need not be Galois in the classical sense,
a $K$-Hopf algebra $H$ which acts on $L$ via $\cdot$ in such a way
that it behaves like a Galois group. The pair $(H,\cdot)$ is
a \textit{Hopf--Galois structure} on $L/K$. While there is
only one Galois group associated to a Galois extension, $L/K$ may
admit several 
Hopf--Galois structures. 
Since the single extension can now be viewed through several lenses, 
this has important applications, including to Galois module theory.
One important notion is that of the
Hopf--Galois correspondence. As in the classical Galois
correspondence, given a Hopf--Galois structure $(H,\cdot)$ on $L/K$, the Hopf
subalgebras of $H$ correspond to intermediate fields of $L/K$. This
correspondence is always injective but not necessarily surjective.

A \textit{brace} is a triple $(B,+,\circ)$, where $B$ is a set 
and $+$ and $\circ$ are 
operations such that $(B,+)$ is an abelian group and $(B,\circ)$ is a group, 
and the elements of $B$ satisfy the following `skew-distributivity' relation:
\[a\circ(b+c)=a\circ b-a+a\circ c \;\;\; \forall a,b,c \in B.\]
The {\it order} of the brace is $|B|$. 
(While $B$ need not be finite, we assume so here.) 
Some of the fundamental properties of braces were formulated by 
Rump \cite{Rum07}. 
By relaxing the condition that $(B,+)$
is abelian, 
Guarnieri and Vendramin \cite{GV17}
introduced {\it skew braces}. These provide non-degenerate solutions 
to the set-theoretic Yang-Baxter equation (ST-YBE). 
One motive behind understanding solutions to the ST-YBE is to shed more
light on the full version of the Yang-Baxter equation. 
It arises in many contexts, 
including statistical mechanics and representations of the braid group.

Skew bracoids 
were introduced 
by Martin-Lyons and Truman \cite{MLT24}. 
They offer a broader perspective of skew braces; instead of a set with 
two binary operations, a skew bracoid comprises two groups 
$(N,+)$ and $(G,\circ)$ with a transitive action of $G$ on $N$ 
which mimics skew brace behaviour.  The groups need not have
the same order. 
Hence, as we discuss later, the notion of `order' no longer makes 
sense for skew bracoids; instead, we refer to $|N|$ as 
the {\it degree} of the skew bracoid.

Colazzo, Koch,
Martin-Lyons and Truman \cite{CKMLT24} showed that certain classes of skew
bracoids yield right (but not necessarily left)
non-degenerate solutions to the ST-YBE.

The intimate connection between Hopf--Galois
structures on Galois/separable extensions and skew braces/bracoids
has been explored in a series of papers. On the one hand, given a separable extension $L/K$
with Galois closure $E$, if $J=\Gal(E/K)$ and
$J'=\Gal(E/L)$, then Greither and Pareigis \cite{GP87} showed that
the Hopf--Galois structures on $L/K$ 
correspond to certain regular
subgroups $N$ of $\Perm(J/J')$, the symmetric group on the set of 
left cosets of $J'$ in $J$. Therefore $|N|$ is 
the degree of $L/K$, and $N$ is the \textit{type} of the
associated Hopf--Galois structure. 
Byott \cite{Byo96} showed that 
separable extensions admitting
Hopf--Galois structures of type $N$ 
correspond to transitive subgroups of 
$\Hol(N) \cong N \rtimes \Aut(N)$, the holomorph of $N$. 
On the other hand, it was shown in \cite{GV17} that skew
braces with `additive group' isomorphic to $N$ correspond to regular
subgroups of $\Hol(N)$. The connection between skew braces and
Hopf--Galois structures on Galois extensions via regular subgroups of
the holomorph was explored 
in \cite{SV18}. 
Martin-Lyons and Truman \cite{MLT24}
showed that skew bracoids
correspond to transitive subgroups of the holomorph, and are therefore
closely connected to Hopf--Galois structures on separable
extensions.

The connections between both structures
makes the task of enumerating and classifying them 
particularly amenable to algorithms and computation.
One motivation for doing so is to produce a database 
of examples, so permitting their in-depth study.
Guarnieri and Vendramin \cite[Algorithm 5.1]{GV17} 
computed the number of skew braces of order at most $30$. 
By exploiting the observation that two regular
subgroups of $\Hol(N)$ are conjugate if and only if they are conjugate
by an element of $\Aut(N)$,  
Bardakov, Neshchadim and Yadev \cite{BNY20}
enumerated, with some exceptions, the skew braces of order at most 868.
The first computer-aided enumeration of Hopf--Galois structures on Galois
extensions was done by Byott and Vendramin in the appendix of
\cite{SV18}. Crespo and Salguero \cite{CS20} gave
an algorithm which they used to enumerate the Hopf--Galois structures on 
separable extensions of degree at most $11$ and in \cite{CS21} extended these 
results to degree $31$.

In this paper, we present new algorithms to enumerate and classify
both Hopf--Galois structures on separable extensions and skew
bracoids. Given a finite group $N$, 
we use the computational algebra system {\sc Magma} \cite{BCP97} to 
compute the transitive subgroups of $\Hol(N)$, and then sort these
into relevant classes to classify the corresponding
structures.  Unlike \cite{CS20,CS21}, our approach does 
not rely on the classification of all transitive permutation groups
of a given degree; these are known up to degree 48 \cite{Deg48}.
Using our implementations of these algorithms
and extensive computing resources, we obtained significant
extensions to existing results. 
We enumerated Hopf--Galois structures 
on separable extensions and skew bracoids up to degree 200, 
excluding those of degree 64, 96, 128, 144, 160, 162 and 192.  
The resources required for classification are significantly greater. 
Consequently, we classified the structures only up to degree 100, excluding 
those of degree 32, 48, 64, 80, 81 and 96.  The principal
limitation to the enumeration algorithm is constructing the 
transitive subgroups of the relevant holomorphs; our classification 
algorithm must also solve challenging isomorphism problems.
Detailed results are available at \cite{GITPAGE} and are 
recorded in a format which permits both their ready access and further 
study within {\sc Magma}.  Our implementations can also be used 
for additional or more focused explorations.

The structure of the paper is the following.
We review necessary preliminaries in Section \ref{prelim}. 
Our enumeration and classification algorithms are described
in Section \ref{alg}, where we highlight the new 
ideas and explain how both the relevant Hopf--Galois 
structures and skew bracoids may be recovered from the data. 
In Section \ref{results}, we present our enumeration and structure results.
Motivated by some of these results, 
in Section \ref{ord2pq} we study 
Hopf--Galois structures and skew bracoids of degree $2pq$ where $p$ and $q$ 
are distinct odd primes. Finally, we present some observations
and a conjecture inspired by our data. 

\section{Preliminaries}\label{prelim}
In this section, we review the necessary definitions and results
relating to Hopf--Galois structures, skew bracoids and transitive
subgroups. 

\subsection{Transitive subgroups of the holomorph}\label{transgps}
Unless otherwise stated, let $n$ be a positive integer and 
let $N$ be a group of order $n$.
\begin{definition}
    A subgroup $M$ of $\Perm(N)$ is \textit{transitive} if $M$ acts
    transitively on $N$.  A transitive subgroup $M$ is 
    \textit{regular} if $|M|=|N|$, or, equivalently, if
    $\Stab_M(1_N)=1_M$.
\end{definition}
Observe that 
we may view $N$ inside $\Perm(N)$ as the image of the left translation
map \mbox{$\lambda:N \to \Perm(N)$}. 

\begin{definition}
    The \textit{holomorph} $\Hol(N)$ of $N$ is the normaliser of
    $\lambda(N)$ in $\Perm(N)$. 
    \[\Hol(N)=\mathrm{Norm}_{\Perm(N)}(\lambda(N))=\{\pi \in \Perm(N) \mid \pi\lambda(\eta)\pi^{-1} \in \lambda(N) \; \forall \eta \in N\}.\]
\end{definition}
For notational convenience, 
we sometimes write $\lambda(\eta)$ as $\lambda_\eta$.
In hand computation, we often use the 
observation that $\Hol(N) \cong N \rtimes \Aut(N)$.
Multiplication in $\Hol(N)$ is defined as follows: for
$\eta,\mu \in N$ and $\alpha,\beta \in \Aut(N)$, 
    \[(\eta,\alpha)(\mu,\beta)=(\eta\alpha(\mu),\alpha\beta).\]
Every subgroup $M$ of $\Hol(N)$ 
induces an action $\cdot$ on $N$ given by
\[(\eta,\alpha)\cdot \mu = \eta\alpha(\mu).\]
The following is a slight adaption of a key observation in \cite{BNY20}.
\begin{proposition}\label{trans_conj}
    Let $M_1$ and $M_2$ be subgroups of $\Hol(N)$ such that $M_1$ is
    transitive on $N$. Then $M_1$ is conjugate to $M_2$ in $\Hol(N)$
    if and only if they are conjugate by an element of $\Aut(N)$.
\end{proposition}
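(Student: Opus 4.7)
The plan is to prove the easy direction first and then reduce the forward direction to finding a suitable element of $G_1$ that can be absorbed into the conjugator. The backward implication is immediate: under the isomorphism $\Hol(N) \cong N \rtimes \Aut(N)$, the subgroup $\Aut(N)$ embeds as $\{(1_N,\alpha) : \alpha \in \Aut(N)\} \leq \Hol(N)$, so conjugation by an element of $\Aut(N)$ is in particular conjugation inside $\Hol(N)$.

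For the forward direction, suppose $\pi G_1 \pi^{-1} = G_2$ for some $\pi \in \Hol(N)$, and write $\pi = (\eta,\alpha)$ using the semidirect product decomposition. The strategy is to replace $\pi$ by $\pi g_0$ for a carefully chosen $g_0 \in G_1$; since $g_0 G_1 g_0^{-1} = G_1$, this replacement does not change the conjugate subgroup, and it suffices to exhibit $g_0 \in G_1$ such that $\pi g_0$ lies in $\Aut(N) \leq \Hol(N)$, i.e.\ has trivial $N$-component.

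To do this, I would use transitivity of $G_1$ on $N$: for every $\mu \in N$ there exists an element of $G_1$ whose first coordinate is $\mu$, because for any $(\eta',\alpha') \in \Hol(N)$ the induced action satisfies $(\eta',\alpha')\cdot 1_N = \eta'$. Applying this to $\mu = \alpha^{-1}(\eta^{-1})$, I obtain some $g_0 = (\alpha^{-1}(\eta^{-1}),\alpha') \in G_1$. A direct computation using the multiplication rule $(\eta,\alpha)(\mu,\beta) = (\eta\alpha(\mu),\alpha\beta)$ then gives
\[
\pi g_0 = (\eta,\alpha)(\alpha^{-1}(\eta^{-1}),\alpha') = (\eta\cdot\eta^{-1},\,\alpha\alpha') = (1_N,\alpha\alpha'),
\]
which lies in the $\Aut(N)$-part of $\Hol(N)$. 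Setting $\sigma := \pi g_0 \in \Aut(N)$, we have $\sigma G_1 \sigma^{-1} = \pi(g_0 G_1 g_0^{-1})\pi^{-1} = \pi G_1 \pi^{-1} = G_2$, as required.

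There is no real obstacle here: the main content is the observation that transitivity of $G_1$ forces the projection of $G_1$ onto the $N$-factor of $N \rtimes \Aut(N)$ to be surjective, so the translation part of any conjugator can always be absorbed by multiplying by a suitable element of $G_1$. The only thing to be careful about is the direction of the semidirect product convention and the fact that we are using $G_1$ transitive (not $G_2$); the argument would fail symmetrically if only $G_2$ were assumed transitive, since we would then need to absorb on the right rather than the left, though a similar argument using $\pi^{-1}$ works in that case as well.
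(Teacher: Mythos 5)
Your proof is correct and follows essentially the same strategy as the paper: use transitivity of $G_1$ to produce an element of $G_1$ that, multiplied against the conjugator $(\eta,\alpha)$, kills its $N$-component, then absorb that element into $G_1$ under conjugation. The only cosmetic difference is that you post-multiply the conjugator on the right by $g_0\in G_1$, whereas the paper pre-multiplies it on the left by $(\nu,\gamma)\in G_1$ with $\nu\gamma(\eta)=1$; the two are mirror images of the same computation.
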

\begin{proof}
    Let $M_1$ be a transitive subgroup of $\Hol(N)$, and let 
    $(\eta,\alpha) \in \Hol(N)$ such that $M_2=(\eta,\alpha)^{-1}M_1(\eta,\alpha)$. We show that there exists
    $\beta \in \Aut(N)$ such
    that \[(1,\beta)^{-1}M_1(1,\beta)=(\eta,\alpha)^{-1}M_1(\eta,\alpha)=M_2.\]
    To this end, since $M_1$ acts transitively on $N$, there 
    exists $(\mu,\gamma) \in M_1$ such that $(\mu,\gamma)\cdot \eta =
    \mu\gamma(\eta)=1$. Therefore
    \[(\mu,\gamma)(\eta,\alpha)=(1,\gamma\alpha).\]
    If $\beta=\gamma\alpha \in \Aut(N)$, then 
    \begin{align*}
        (1,\beta)^{-1}M_1(1,\beta)&=((\mu,\gamma)(\eta,\alpha))^{-1}M_1(\mu,\gamma)(\eta,\alpha)\\ 
&=(\eta,\alpha)^{-1}(\mu,\gamma)^{-1}M_1(\mu,\gamma)(\eta,\alpha) \\
&=(\eta,\alpha)^{-1}M_1(\eta,\alpha)\\
&=M_2. \qedhere
    \end{align*} 
\end{proof}
\begin{corollary}\label{conj_isom}
    Let $M_1$ and $M_2$ be subgroups of $\Hol(N)$ such 
that $M_1$ is transitive on
    $N$. If $M_1$ and $M_2$ are conjugate in $\Hol(N)$, then they
    are permutation isomorphic and so, in particular, $M_2$
    is also transitive on $N$.
\end{corollary}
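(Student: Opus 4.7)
The plan is to combine Proposition \ref{trans_conj} with the general principle that conjugation in $\Perm(N)$ is a permutation-group isomorphism. First, since $G_1$ is transitive and $G_1, G_2$ are conjugate inside $\Hol(N)$, Proposition \ref{trans_conj} supplies an element $\beta \in \Aut(N)$ such that $G_2 = (1,\beta)^{-1} G_1 (1,\beta)$. So everything reduces to understanding the effect of conjugation by a pure automorphism.

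Next, I would compute the conjugate of a typical element. Using the multiplication rule in $\Hol(N)$, for any $(\eta,\alpha) \in G_1$ one has
\[
(1,\beta)^{-1}(\eta,\alpha)(1,\beta) = (\beta^{-1}(\eta),\, \beta^{-1}\alpha\beta).
\]
Applying this to $\mu \in N$ via the induced action gives
\[
(\beta^{-1}(\eta),\beta^{-1}\alpha\beta)\cdot \mu = \beta^{-1}(\eta)\cdot \beta^{-1}\alpha\beta(\mu) = \beta^{-1}\bigl(\eta\alpha(\beta(\mu))\bigr) = \beta^{-1}\bigl((\eta,\alpha)\cdot \beta(\mu)\bigr).
\]
Thus the permutation of $N$ induced by an element of $G_2$ is exactly the conjugate, by the bijection $\beta \in \Perm(N)$, of the permutation induced by the corresponding element of $G_1$. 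In other words, the map $(\eta,\alpha) \mapsto (1,\beta)^{-1}(\eta,\alpha)(1,\beta)$ is an isomorphism $G_1 \to G_2$ which intertwines the two actions via $\beta$, exhibiting $G_1$ and $G_2$ as isomorphic permutation groups.

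Finally, since $\beta$ is a bijection of $N$, the intertwining identity shows that the $G_2$-orbit of any $\mu \in N$ is $\beta^{-1}$ applied to the $G_1$-orbit of $\beta(\mu)$. Because $G_1$ has a single orbit, so does $G_2$, and hence $G_2$ is transitive. There is no real obstacle here: the content has already been absorbed by Proposition \ref{trans_conj}, and what remains is the routine observation that conjugation by a permutation preserves the cycle structure of the action.
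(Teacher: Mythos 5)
Your proposal is correct and follows essentially the same route as the paper: both first invoke Proposition \ref{trans_conj} to replace the conjugating element by some $\beta \in \Aut(N)$, and then observe that conjugation by $\beta$, a permutation of $N$ fixing $1_N$, gives an isomorphism of permutation groups. The only cosmetic difference is that you verify the intertwining identity (the $G_2$-action equals $\beta^{-1}$ composed with the $G_1$-action composed with $\beta$) and deduce transitivity from it, whereas the paper instead checks directly that $\beta\,\mathrm{Stab}_{G_1}(1_N)\,\beta^{-1}=\mathrm{Stab}_{G_2}(1_N)$; both yield the stated conclusion.
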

\begin{proof}
    If $M_1$ and $M_2$ are conjugate in $\Hol(N)$, then
    by Proposition \ref{trans_conj} there exists 
    $\beta \in \Aut(N)$ such that $\beta M_1 \beta^{-1} =
    M_2$. Further, since $\beta(1_N)=1_N$, clearly 
    $\beta\Stab_{M_1}(1_N)\beta^{-1} \leq
    \Stab_{M_2}(1_N)$. If $g_2 \in \Stab_{M_2}(1_N)$,
    then there exists $g_1 \in M_1$ such that $\beta g_1
    \beta^{-1}=g_2$,
    Hence $g_1 \in
    \Stab_{M_1}(1_N)$ and
    $\beta\Stab_{M_1}(1_N)\beta^{-1} =
    \Stab_{M_2}(1_N)$. Thus conjugation by $\beta$ is an
    isomorphism of permutation groups.
\end{proof}

\subsection{Hopf--Galois structures}\label{HGS}
Let $L/K$ be a field extension and let $H$ be a $K$-Hopf algebra
acting on $L$ with action $\cdot$ such that $L$ is an $H$-module
algebra
(see, for example, \cite[Chapter 1, \S2]{Chi00}). 

We say that $H$ with its action on $L$ gives a
\textit{Hopf--Galois structure} on $L/K$ if $\cdot$ induces an
isomorphism between the $K$-vector spaces $L \otimes_K H$ and
$\mathrm{End}_K(L)$. If, in addition, $L/K$ is separable, then denote
its Galois closure by $E$, and define the Galois groups $J=\Gal(E/K)$
and $J'=\Gal(E/L)$. Consider the left translation map $\lambda:J
\rightarrow \Perm(J/J')$, $\lambda(g)(\overline{h})=\overline{gh}$. It
can be shown that $\lambda$ is injective, and so we may identify $J$
with a permutation group of degree $n=[L:K]$ 
(where $J'$ is identified with $\lambda(J')$).
Greither and Pareigis 
\cite{GP87} showed that if $N\leq\Perm(J/J')$ acts regularly on $J/J'$ and is
normalised by $\lambda(J)$, then $L/K$ admits a Hopf--Galois structure
$H=E[N]^J$ of type (the isomorphism class of) $N$, with $L-$action
given by 
\begin{equation}\label{H_action}
\left(\sum_{\eta \in N}a_{\eta}\eta\right)\cdot x = \sum_{\eta \in
  N}a_{\eta}(\eta^{-1}(1_{J/J'}))(x).
\end{equation}
This correspondence is bijective, so every Hopf--Galois
structure on $L/K$ arises in this way. 
However $\Perm(J/J')$ grows quickly, so it may be computationally
infeasible to find all the relevant subgroups. 
The following theorem of Byott \cite{Byo96} 
greatly reduces our search space; we use the formulation of 
\cite[Chapter 2]{Chi00}.
\begin{theorem}[Byott's translation]\label{Byo_trans}
    There is a bijection between the following sets:
    \[\mathcal{N}=\{\alpha:N \rightarrow \Perm(J/J') \mid \alpha 
\text{ injective homomorphism with } \alpha(N) \text{ regular}\}\]
    and
    \[\mathcal{J}=\{\beta:J\rightarrow \Perm(N) \mid \beta \text{ 
injective homomorphism with } \beta(J')=\Stab_{\beta(J)}(1_N)\}.\]
    If $\alpha,\alpha' \in \mathcal{N}$ correspond to
    $\beta,\beta' \in \mathcal{J}$, then $\alpha(N)=\alpha'(N)$ if and
only if
    $\beta(J)$ and $\beta'(J)$ are conjugate by some $\phi \in
    \Aut(N)$; and $\alpha(N)$ is normalised by the image of the 
left translation map $\lambda: J \to \Perm(J/J')$ if and 
only if $\beta(J)$ is contained in $\Hol(N)$.
\end{theorem}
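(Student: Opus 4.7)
The plan is to build mutually inverse maps between $\mathcal{N}$ and $\mathcal{G}$ by exploiting the fact that a regular action yields a canonical identification of the acted-on set with the acting group.

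Starting from $\alpha \in \mathcal{N}$, the regular action of $\alpha(N)$ on $G/G'$ supplies a bijection $a : N \to G/G'$, $\eta \mapsto \alpha(\eta)(\overline{1_G})$, with $a(1_N) = \overline{1_G}$. I would then transport the left-regular action $\lambda : G \to \Perm(G/G')$ across this bijection, setting $\beta(g) := a^{-1} \lambda(g) a \in \Perm(N)$. Injectivity of $\beta$ and the identity $\beta(G') = \mathrm{Stab}(1_N)$ drop out of the corresponding properties of $\lambda$ together with the normalisation $a(1_N) = \overline{1_G}$, giving $\beta \in \mathcal{G}$.

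For the reverse direction, given $\beta \in \mathcal{G}$, the stabiliser identity $\beta(G') = \mathrm{Stab}(1_N)$ makes $b : G/G' \to N$, $gG' \mapsto \beta(g)(1_N)$, a well-defined map which is injective and, by comparison of cardinalities, bijective. Transporting the left-regular action $\lambda_N : N \to \Perm(N)$ across $b$ via $\alpha(\eta) := b^{-1} \lambda_N(\eta) b$ recovers an element of $\mathcal{N}$, and a short unwinding of the definitions shows the two constructions are mutually inverse.

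For the \emph{furthermore} clauses, the key observation is that two injective homomorphisms $\alpha,\alpha' : N \to \Perm(G/G')$ with $\alpha(N)=\alpha'(N)$ must differ by a unique automorphism $\phi \in \Aut(N)$, and correspondingly $a' = a \circ \phi$. Tracing this through the definition of $\beta$ gives $\beta'(g) = \phi^{-1} \beta(g) \phi$, so $\beta(G)$ and $\beta'(G)$ are conjugate by $\phi \in \Aut(N) \leq \Perm(N)$, and the reverse implication follows by running the same argument backwards. For the holomorph claim, conjugation by $a$ induces an isomorphism $\Perm(G/G') \cong \Perm(N)$ sending $\alpha(N) \to \lambda_N(N)$ and $\lambda(G) \to \beta(G)$; since normalisation is preserved under this isomorphism, $\lambda(G)$ normalises $\alpha(N)$ iff $\beta(G)$ normalises $\lambda_N(N)$, which is exactly $\beta(G) \subseteq \Hol(N)$.

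The main obstacle I expect lies in the \emph{furthermore} part, specifically in tracking the automorphism: one must verify that the element conjugating $\beta(G)$ onto $\beta'(G)$ is genuinely an automorphism of $N$ (rather than merely a permutation), and that the correspondence between automorphisms on the $\alpha$ side and conjugations on the $\beta$ side is tight enough to give the stated biconditional rather than just a one-sided implication.
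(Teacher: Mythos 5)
The paper does not actually prove this theorem --- it states it with attribution to Byott \cite{Byo96} and Childs \cite{Chi00} --- so your proposal has to stand on its own. Your construction of the bijection is the standard one and is correct: transporting $\lambda$ across the bijection $a$ (resp.\ $\lambda_N$ across $b$) gives mutually inverse maps between $\mathcal{N}$ and $\mathcal{G}$; your argument for the holomorph clause is exactly right, since conjugation by $a$ carries the pair $(\alpha(N),\lambda(G))$ to $(\lambda_N(N),\beta(G))$; and your forward computation for the first \emph{furthermore} clause correctly yields the pointwise identity $\beta'(g)=\phi^{-1}\beta(g)\phi$ for all $g\in G$.

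The genuine gap is the reverse implication of that clause, and it is exactly the point you flagged as a worry. Knowing only that the \emph{subgroups} $\beta(G)$ and $\beta'(G)$ are conjugate by some $\phi\in\Aut(N)$ determines $\beta'$ only up to precomposition with an automorphism $\theta\in\Aut(G,G')$, since any such $\theta$ preserves both the image and the stabiliser condition. Tracing $\beta'=(\phi\beta\phi^{-1})\circ\theta$ back through your construction gives $b'=\phi\circ b\circ\overline{\theta}$ and hence $\alpha'(N)=\overline{\theta}^{\,-1}\alpha(N)\overline{\theta}$, where $\overline{\theta}$ is the permutation of $G/G'$ induced by $\theta$; this need not equal $\alpha(N)$, because $\overline{\theta}$ need not normalise the regular subgroup $\alpha(N)$. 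This is not a removable technicality: for $G=\C_2\times\C_2$, $G'=1$, $N=\C_4$, there are three distinct regular cyclic subgroups $\alpha(N)\leq\Perm(G/G')$, all normalised by $\lambda(G)$, yet every corresponding $\beta$ has the \emph{same} image, namely the unique regular Klein four-subgroup of $\Hol(\C_4)\cong\D_4$ --- so the biconditional is false if read at the level of images. The statement Byott and Childs actually prove, which the wording here paraphrases loosely, is that $\beta$ and $\beta'$ are conjugate \emph{as homomorphisms}: $\beta'(g)=\phi^{-1}\beta(g)\phi$ for every $g$. That is precisely what your forward computation produces, and with that as the hypothesis your ``run the argument backwards'' step does close up ($\beta'=\phi^{-1}\beta\phi$ forces $b'=\phi^{-1}\circ b$, hence $\alpha'=\alpha\circ\phi$ and $\alpha'(N)=\alpha(N)$). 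You should restate the clause at the level of the maps before attempting the converse; as written, the converse cannot be proved because it is not true.
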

A Hopf--Galois structure of type $N$ \textit{realises} the pair $(J, J')$ 
if there is a transitive subgroup $M$
of $\Hol(N)$ and an isomorphism $\phi:J \to M$ such 
that $\phi(J')=\Stab_M(1_N)$.
Theorem \ref{Byo_trans} underpins 
the following counting result.
\begin{lemma}[\cite{Byo96}]\label{Byott_num_HGS}
    Let $J,J'$ and $N$ be as above. Let $e(J,J',N)$ be the number of
    Hopf--Galois structures of type $N$ which realise $(J,J')$, and
    let $e'(J,J',N)$ be the number of transitive subgroups $M$ of
    $\mathrm{Hol}(N)$ isomorphic to $J$ via an isomorphism 
taking $\Stab_M(1_N)$ to $J'$. Now 
    \[e(J,J',N)=\frac{|\mathrm{Aut}(J,J')|}{|\mathrm{Aut}(N)|}e'(J,J',N),\]
    where
    \[\Aut(J,J')=\left\{\theta\in\Aut(J) \mid \theta(J')=J'\right\},\]
    the group of automorphisms $\theta$ of $J$ such that $\theta$
    fixes the identity coset $1_JJ'$ of $J/J'$.
\end{lemma}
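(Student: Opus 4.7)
The plan is to use Byott's Translation (Theorem \ref{Byo_trans}) as a bridge between the two sides of the formula, expressing each count as the size of a set of injective homomorphisms divided by a constant fibre size. The three steps are: rewrite $e(G,N)$ as a count in $\mathcal{N}$, transfer across Byott's bijection to $\mathcal{G}$, then re-express in terms of $e'(G,N)$.

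First, I would rewrite $e(G,N)$ using the Greither--Pareigis correspondence: $e(G,N)$ counts regular subgroups of $\Perm(G/G')$ that are isomorphic as abstract groups to $N$ and normalised by $\lambda(G)$. Letting $\mathcal{N}_0 \subseteq \mathcal{N}$ be the subset of $\alpha$ whose image is normalised by $\lambda(G)$, the surjection $\alpha \mapsto \alpha(N)$ from $\mathcal{N}_0$ onto these subgroups has all fibres of size $|\Aut(N)|$, since any two injective homomorphisms with equal image differ by precomposition with an automorphism of $N$. Hence $e(G,N) = |\mathcal{N}_0|/|\Aut(N)|$.

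Next, I would use Theorem \ref{Byo_trans}, whose second clause asserts that $\alpha(N)$ is normalised by $\lambda(G)$ if and only if the corresponding $\beta(G)$ is contained in $\Hol(N)$. Byott's bijection therefore restricts to a bijection $\mathcal{N}_0 \leftrightarrow \mathcal{G}_0$, where $\mathcal{G}_0 := \{\beta \in \mathcal{G} \mid \beta(G) \subseteq \Hol(N)\}$, giving $|\mathcal{N}_0| = |\mathcal{G}_0|$. Finally, I would count $|\mathcal{G}_0|$ by grouping elements according to the image $M := \beta(G)$. The condition $\beta(G') = \mathrm{Stab}(1_N)$ together with injectivity forces $M$ to be transitive on $N$, since the orbit of $1_N$ has size $[M:\mathrm{Stab}_M(1_N)] = [G:G'] = n = |N|$, and by construction $M' := \mathrm{Stab}_M(1_N) = \beta(G')$. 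Hence the images arising in $\mathcal{G}_0$ are exactly the transitive subgroups $M \leq \Hol(N)$ counted by $e'(G,N)$. For a fixed such $M$, after choosing one $\beta_0 \in \mathcal{G}_0$ with $\beta_0(G) = M$, any other $\beta$ in the same fibre equals $\beta_0 \circ \theta$ for a unique $\theta \in \Aut(G)$, and the requirement $\beta(G') = M' = \beta_0(G')$ forces $\theta(G') = G'$, i.e.\ $\theta \in \Aut(G,G')$. Conversely every such $\theta$ produces a valid $\beta$. So each fibre has size $|\Aut(G,G')|$, giving $|\mathcal{G}_0| = |\Aut(G,G')| \cdot e'(G,N)$, and combining the three identities yields the formula.

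There is no single hard step here; the main obstacle is really only careful bookkeeping. One must check that Byott's bijection restricts correctly to the normalised/$\Hol(N)$ subproblem, and be precise about which intermediate sets are counted with, and which without, quotienting by an automorphism group, so that the two fibre-size cancellations combine into the stated ratio $|\Aut(G,G')|/|\Aut(N)|$ rather than its reciprocal.
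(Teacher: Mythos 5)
The paper does not prove this lemma at all --- it is quoted directly from \cite{Byo96} (in the form given by Childs), so there is no internal proof to compare against. Your argument is correct and is essentially the standard derivation: expressing $e(G,N)$ as $|\mathcal{N}_0|/|\Aut(N)|$ via Greither--Pareigis, transporting $\mathcal{N}_0$ to $\mathcal{G}_0$ through the two auxiliary clauses of Theorem~\ref{Byo_trans}, and then fibering $\mathcal{G}_0$ over its set of images with fibres of size $|\Aut(G,G')|$. The bookkeeping is handled correctly, including the two points most easily fumbled: that transitivity of $\beta(G)$ follows from $[\beta(G):\beta(G')]=[G:G']=|N|$, and that the two fibre sizes land on opposite sides of the fraction so the ratio comes out as $|\Aut(G,G')|/|\Aut(N)|$ and not its reciprocal.
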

Let $(H,\cdot)$ give a Hopf--Galois structure on a field extension
$L/K$, let $\varepsilon$ be the counit of $H$ and let $I$ be a Hopf subalgebra of $H$. The subfield
of $L$ fixed by $I$ is
\[L^I=\{x \in L \mid h\cdot x=\varepsilon(h)x \; \forall h \in I\}.\]
In particular, $L^H=K$. In \cite{CS69} it was shown that
this `Hopf--Galois correspondence' is injective and inclusion
reversing, but it is not necessarily surjective, unlike the usual
Galois correspondence. However, certain separable
extensions admit at least one
Hopf--Galois structure for which the Hopf--Galois correspondence is
surjective. A separable extension $L/K$ is \textit{almost
classically Galois} if $J'$ has a normal complement in $J$;
equivalently, there is a regular subgroup $N$ of $\Perm(J/J')$
normalised by $J$ and contained in $J$. Therefore, if $L/K$ is almost
classically Galois, then $J \cong N \rtimes J'$, and 
the Hopf algebra $H=E[N^{opp}]^J$ gives a Hopf--Galois structure
on $L/K$ admitting a bijective correspondence. Here, $N^{opp}$ denotes
the opposite group of $N$: that is, $N^{opp}$ has underlying set $N$,
and if $*$ and $*_{opp}$ are the operations on $N$ and $N^{opp}$
respectively, then $g *_{opp} h = h * g$ for all $g,h \in N$. Although
the property of being almost classically Galois was initially applied to
the field extension, Kohl \cite{Kohl98} refers to the Hopf--Galois
structure given by $E[N^{opp}]^J$ as an \textit{almost classically
Galois structure}. 
A natural question is: how many
such structures does an almost classically Galois
extension admit? That 
$L/K$ is almost classically Galois does not guarantee that all
Hopf--Galois structures on $L/K$ are almost classically Galois, nor
that all such admit a bijective correspondence.

\subsection{Skew bracoids}\label{sboids}
A \textit{skew bracoid} is a quintuple $(N,+,G,\circ,\odot)$ such that
$(N,+)$ and $(G,\circ)$ are (not necessarily abelian) groups with a
transitive action satisfying the following relation:
\[g \odot (\mu+\eta)=(g\odot \mu)-(g\odot 1_N)+(g\odot \eta) 
\;\; \forall g \in G \mathrm{\ and\ } \forall \eta, \mu \in N.\]
If the group operations on $G$ and $N$ are clear, then we 
denote $(N,+,G,\circ,\odot)$ by $(G,N,\odot)$.
\begin{remark} 
    If $|G| = |N|$, then a skew bracoid is \textit{essentially} a
    skew brace. As noted in \cite{MLT24}, the structure of
    $G$ may be transported to $N$ via the rule
    \[(g \odot 1_N)\circ(h \odot 1_N)=(gh) \odot 1_N.\]
    Now $(N,+,\circ)$ is a skew brace. Define the map $r:N \times N \rightarrow N \times N$ by
    \[r(x,y)=(-x+x \circ y,(-x+x \circ y)^{-1}\circ x \circ y)\]
    for all $x,y \in N$. 
Both components of $r$ are bijective functions, and $r$ satisfies
    \[(r \times \mathrm{id})(\mathrm{id}\times r)(r \times \mathrm{id})=(\mathrm{id}\times r)(r \times \mathrm{id})(\mathrm{id}\times r).\]
Hence $r$ gives a non-degenerate solution to the ST-YBE on the set $N$.
As mentioned above, certain families of
    skew bracoids yield other types of
    solutions to the ST-YBE; we omit the details. 
\end{remark}
Recall that the size of the underlying set of a skew brace is typically 
called the \textit{order} of the brace. 
This no longer makes sense for skew bracoids since we now consider 
two sets of possibly different sizes. 
Instead, we define the \textit{degree} of the skew bracoid $(G,N,\odot)$ to be 
the order of $N$: this better reflects the relationship of $(G,N,\odot)$ to the 
corresponding permutation group of degree $|N|$. If the skew 
bracoid is essentially a skew brace, then the notions 
of `degree' and `order' coincide.

To classify these structures, we must understand them in
more detail. Suppose we have skew bracoids
$(G_1,N,\odot_1)$ and $(G_2,N,\odot_2)$, and let
$\lambda_{\odot_1}:G_1 \to \Perm(N)$ and $\lambda_{\odot_2}:G_2 \to
\Perm(N)$ be the permutation representations determined by the 
actions $\odot_1$ and $\odot_2$ respectively. 
It was shown in \cite[Theorem 2.8]{MLT24}
that $\lambda_{\odot_i}(G_i)$ is contained in $\Hol(N)$ and 
is isomorphic to a quotient of $G_i$. 
If $\lambda_{\odot_1}(G_1)=\lambda_{\odot_2}(G_2)$, then 
$G_1$ and $G_2$ both act on $N$ in `essentially the same' way. If
$(G_1,N,\odot_1)$ and $(G_2,N,\odot_2)$ are related in this
way, then they are \textit{equivalent}. This reduces
to equality in the case of skew braces.

The following is \cite[Corollary 2.21]{MLT24}.
\begin{proposition}
    Given a group $N$, there is a bijection between transitive
    subgroups of $\Hol(N)$ and equivalence classes of skew bracoids
    $(G,N,\odot)$.
\end{proposition}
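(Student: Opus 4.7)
The plan is to exhibit the bijection explicitly by sending an equivalence class $[(G,N,\odot)]$ to the subgroup $\lambda_\odot(G) \leq \Hol(N)$. First I would note that this assignment is well-defined: by the cited Theorem 2.8 of \cite{MLT24}, $\lambda_\odot(G)$ always lies in $\Hol(N)$, it is transitive on $N$ because the skew bracoid action is transitive, and the definition of equivalence is literally the statement that equivalent skew bracoids share the same image in $\Hol(N)$. Injectivity of the map is then immediate from the same definition.

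For surjectivity, given a transitive subgroup $M \leq \Hol(N)$, I would construct a skew bracoid $(M,N,\odot_M)$ whose image under $\lambda_{\odot_M}$ recovers $M$. Using the identification $\Hol(N) \cong N \rtimes \Aut(N)$, I let $\odot_M$ be the restriction to $M$ of the natural $\Hol(N)$-action on $N$ recorded earlier in the paper, namely
\[
(\eta,\alpha) \odot_M \mu = \eta + \alpha(\mu),
\]
where $+$ denotes the group operation on $N$. Transitivity is part of the hypothesis on $M$, and the skew bracoid identity reduces to a short calculation: since $\alpha \in \Aut(N,+)$,
\[
(\eta,\alpha) \odot_M (\mu+\nu) = \eta + \alpha(\mu) + \alpha(\nu),
\]
and because $(\eta,\alpha) \odot_M 1_N = \eta$, the right-hand side of the axiom is
\[
(\eta + \alpha(\mu)) - \eta + (\eta + \alpha(\nu)) = \eta + \alpha(\mu) + \alpha(\nu),
\]
giving equality. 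Thus $(M,N,\odot_M)$ is genuinely a skew bracoid, and since $\lambda_{\odot_M}$ is just the inclusion $M \hookrightarrow \Perm(N)$, one has $\lambda_{\odot_M}(M) = M$, which is the required preimage.

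The main obstacle, such as it is, is bookkeeping in the surjectivity step: one must verify that the action inherited from $\Hol(N)$ satisfies the skew bracoid axiom, and this is precisely where the semidirect product structure $\Hol(N) \cong N \rtimes \Aut(N)$ becomes essential, since the identity uses in a crucial way that the $\Aut(N)$-component respects $+$. Everything else is formal: well-definedness and injectivity are built into the notion of equivalence, and the image of the constructed skew bracoid is tautologically $M$.
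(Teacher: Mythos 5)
Your proposal is correct: the paper itself cites this result from \cite{MLT24} without proof, and your explicit bijection (equivalence class $\mapsto$ image of the permutation representation, with surjectivity via restricting the natural $\Hol(N)\cong N\rtimes\Aut(N)$ action) is the standard argument; indeed the verification $(\eta,\alpha)\odot\mu=\eta+\alpha(\mu)$ satisfies the skew bracoid identity is exactly the computation the paper carries out later in Proof~1 of its proposition that $(G,N,\odot)$ is a skew bracoid. No gaps.
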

Every skew bracoid $(G,N,\odot)$ is equivalent to a skew
bracoid $(G',N',\odot')$ where $G'$ acts faithfully on $N'$ 
(so the permutation representation of $\odot'$ has trivial
kernel). A skew bracoid with such a faithful transitive action is
\textit{reduced}. 

\begin{definition}
    An \textit{isomorphism} of skew bracoids
    $(G_1,N_1,\odot_1)\to(G_2,N_2,\odot_2)$ is a pair of group isomorphisms
    \begin{align*}
        &\phi:G_1\to G_2,\\ &\psi:N_1\to N_2
    \end{align*}
    such that
    \[\psi(g \odot_1 \eta)=\phi(g)\odot_2 \psi(\eta) \;\; \forall g \in G_1, \eta \in N_1.\]
\end{definition}
If $(\phi,\psi)$ is an isomorphism of skew
bracoids, then $\psi$ is completely determined by
$\phi$.
We need one more result \cite[Proposition 4.13]{MLT24},
which emulates skew brace theory.
\begin{proposition}
    Let $N$ be a group and let $(G_1,N,\odot_1)$ and $(G_2,N,\odot_2)$ be
    reduced skew bracoids. Now $(G_1,N,\odot_1)\cong (G_2,N,\odot_2)$
    if and only if $\lambda_{\odot_1}(G_1)$ is conjugate to
    $\lambda_{\odot_2}(G_2)$ by an element of $\Aut(N)$.
\end{proposition}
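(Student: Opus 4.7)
The plan is to unwind the definition of skew bracoid isomorphism into a statement about the permutation representations $\lambda_{\odot_i}$ and then use the reducedness hypothesis to pass between the $G_i$ and their images. The key identity to exploit is that the compatibility condition $\psi(g\odot_1 \eta) = \phi(g)\odot_2 \psi(\eta)$ can be rewritten as
\[\psi\, \lambda_{\odot_1}(g)\, \psi^{-1} = \lambda_{\odot_2}(\phi(g)) \qquad \forall g \in G_1,\]
since the $\odot_i$ actions are implemented precisely by the maps $\lambda_{\odot_i}$. Moreover, since $N=N'$ in this setting, the component $\psi$ of any bracoid isomorphism is an element of $\Aut(N)$.

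For the forward direction, I would start with an isomorphism $(\phi,\psi):(G_1,N,\odot_1)\to(G_2,N,\odot_2)$ and simply read the rewritten identity above as saying that conjugation by $\psi\in\Aut(N)$ sends $\lambda_{\odot_1}(G_1)$ into $\lambda_{\odot_2}(G_2)$. Surjectivity of $\phi$ then upgrades this to an equality $\psi\,\lambda_{\odot_1}(G_1)\,\psi^{-1}=\lambda_{\odot_2}(G_2)$, which is exactly the desired conjugacy.

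For the backward direction, suppose $\psi\in\Aut(N)$ realises this conjugacy. Because both bracoids are reduced, the maps $\lambda_{\odot_1}:G_1\to\lambda_{\odot_1}(G_1)$ and $\lambda_{\odot_2}:G_2\to\lambda_{\odot_2}(G_2)$ are group isomorphisms, so I can define
\[\phi := \lambda_{\odot_2}^{-1}\circ c_\psi \circ \lambda_{\odot_1},\]
where $c_\psi$ denotes conjugation by $\psi$ on $\Perm(N)$. This $\phi$ is a group isomorphism $G_1\to G_2$ by construction, and it satisfies $\lambda_{\odot_2}(\phi(g)) = \psi\,\lambda_{\odot_1}(g)\,\psi^{-1}$ for every $g\in G_1$. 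Evaluating both sides on $\eta\in N$ and reinterpreting via the $\odot_i$ actions yields $\phi(g)\odot_2\psi(\eta)=\psi(g\odot_1\eta)$, which is exactly the compatibility condition. Hence $(\phi,\psi)$ is a skew bracoid isomorphism.

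The only conceptual obstacle is the backward direction's need to invert $\lambda_{\odot_2}$: without reducedness, $\lambda_{\odot_2}$ has a nontrivial kernel, and there is no canonical lift of an element of $\lambda_{\odot_2}(G_2)$ back to $G_2$, so the construction of $\phi$ breaks down. This is precisely why the hypothesis that both bracoids are reduced appears in the statement, and flagging exactly where it is used is the main substantive content of the argument. Everything else is a direct translation between the permutation-representation picture and the $\odot$-action picture.
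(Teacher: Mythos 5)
Your argument is correct: the rewriting of the compatibility condition as $\psi\,\lambda_{\odot_1}(g)\,\psi^{-1}=\lambda_{\odot_2}(\phi(g))$ is exactly the right pivot, and you correctly isolate where reducedness is needed (to invert the $\lambda_{\odot_i}$ and get a genuine isomorphism $\phi$ in the backward direction). The paper itself states this result as a citation of Proposition 4.13 of Martin-Lyons and Truman and gives no proof, so there is nothing internal to compare against; your unwinding is the standard argument one would find in the cited source.
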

Thus, by Corollary \ref{conj_isom}, the number of equivalence classes
of skew bracoids with fixed groups $G$ and $N$, up to isomorphism, is 
the number of conjugacy classes of transitive subgroups $M$
 of $\Hol(N)$ with $M \cong G$.

\textit{Almost classical} skew bracoids were introduced in \cite{ML24}. 
\begin{definition}
    Let $(G,N,\odot)$ be a skew bracoid and
    $S=\Stab_{\odot}(1_N)$. We say that $(G,N,\odot)$ is
    \textit{almost classical} if $S$ has a normal complement $H$ in
    $G$ such that the sub-skew bracoid $(H,N,\odot|_H)$ is 
essentially a skew brace that is trivial
(that is, the operations on $H$ and $N$ coincide).
\end{definition}
Therefore, if $(G,N,\odot)$ is an almost classical skew bracoid, then
$G=H \rtimes S$ for some $H \unlhd G$ such that $(H,N,\odot|_H)$ is 
essentially a skew brace that is trivial.
The methods of \cite{CKMLT24} can now be used to show 
that $(G,N,\odot)$ yields a solution to the ST-YBE.

By \cite[Proposition 4.3]{ML24}, $(G,N,\odot)$ is an
almost classical skew bracoid if and only if $\lambda_{\odot}(G)=N
\rtimes A \subseteq \Hol(N)$ for some $A \leq \Aut(N)$. Suppose 
that $L/K$ is a separable extension with Galois groups $J=\Gal(E/K)$
and $J'=\Gal(E/L)$, and there is an isomorphism $\phi:J \to
\lambda_{\odot}(G)$ such that $\phi(J')=A$. Now $L/K$ is almost
classically Galois, admitting the almost classically Galois structure
given by the Hopf algebra $E[N^{opp}]^J$.

\section{The Algorithms}\label{alg}
Let $n$ be a positive integer. We now outline our algorithm to
classify the Hopf--Galois 
structures on separable extensions
and skew bracoids of degree $n$.

\begin{enumerate}[label=Step \arabic*:]
    \item For each group $N$ of order $n$, construct the sequence 
      $\texttt{t}_N$ of transitive subgroups of $\Hol(N)$ up to conjugacy.
Let $\texttt{T}_n$ be the concatenation of these sequences.
\item 
    Partition $\texttt{T}_n$ into equivalence classes: if $M_1,M_2 \in
      \texttt{T}_n$, then $M_1 \sim M_2$ if and only if there is an
      isomorphism $\phi:M_1 \to M_2$ such that
      $\phi(\Stab_{M_1}(1_{N_1}))=\Stab_{M_2}(1_{N_2})$
      for groups $N_1$ and $N_2$ of order $n$.

    \item For each equivalence class obtained in Step 2, compute the
      number of associated Hopf--Galois structures and skew bracoids,
      the number of those arising from Galois extensions / skew
      braces, the number of almost classical(ly Galois) structures,
      and the number of associated Hopf--Galois structures which admit
      a bijective correspondence.
\end{enumerate}

We can readily modify this algorithm as follows to obtain 
one which simply {\it counts} the number of structures of degree $n$.  
In Step 1, we compute $\texttt{t}_N$ for one group $N$
of order $n$.  Since the subgroups need not be sorted into 
equivalence classes, we skip Step 2.  
The input to Step 3 is $\texttt{t}_N$.
We now apply this modified algorithm to each group $N$ of order $n$.

We first consider how to recover the action needed to construct
the relevant structures. We then discuss in more detail the steps of the
algorithm and how it can be parallelised.

\subsection{Recovering the action on $N$}\label{action-recovery}
Let \mbox{$f:N \hookrightarrow S_n$} be 
the natural embedding of a group $N$ of order $n$ as a permutation group. 
{\sc Magma} 
constructs $\Hol(N)$ as the normaliser of $f(N)$ in $S_n$, and so 
the transitive subgroups of $\Hol(N)$
act on $\{1,\ldots,n\}$, which is not necessarily the underlying set of $N$. 
In this section, we describe how to transport the action on the former set 
to the latter set so that we can construct the relevant Hopf--Galois 
structures and skew bracoids.

Denote by $\Hol(N)$ the group $\mathrm{Norm}_{\Perm(N)}(\lambda(N))$ and by $\Hol_n(N)$ the group $\mathrm{Norm}_{S_n}(f(N))$. Define the homomorphism $h:\Hol_n(N) \to \Stab_{\Hol_n(N)}(1)$ which maps the generators of $f(N)$ to $1_{\Hol_n(N)}$ and fixes the generators of $\Stab_{\Hol_n(N)}(1)$, the stabiliser of $1$.
If $\pi \in \Hol(N)$, then $\pi=\lambda_\eta\alpha$ for some $\eta \in N$ and 
$\alpha \in \Aut(N)$. 
Let $\Phi:\Hol(N) \to \Hol_n(N)$ be an isomorphism 
such that
\begin{align*}
    \Phi(\lambda_{\eta})=f(\eta)\ && \text{and} && \Phi(\alpha) \in \Stab_{\Hol_n(N)}(1)
\end{align*}
for all $\eta \in N$ and $\alpha \in \Stab_{\Hol(N)}(1_N)=\Aut(N)$.

Note that $\alpha\lambda_\eta=\lambda_{\alpha(\eta)}\alpha$, and hence
\[(\lambda_{\eta}\alpha)(\lambda_{\mu})(\alpha^{-1})=\lambda_{\eta\alpha(\mu)}\]
for all $\eta,\mu \in N$ and $\alpha \in \Aut(N)$. 
If $x \in \Hol_n(N)$ satisfies 
$$x=\Phi(\lambda_\eta\alpha)=f(\eta)\Phi(\alpha)=f(\eta)h(x),$$ then 
\[\lambda_{\eta\alpha(\mu)}=\Phi^{-1}(x)\lambda_\mu\Phi^{-1}(h(x))^{-1}\]
so
\[f(\eta\alpha(\mu))=\Phi(\lambda_{\eta\alpha(\mu)})=xf(\mu)h(x)^{-1}.\]
Therefore 
\[(\eta,\alpha)\cdot \mu=\eta\alpha(\mu)=f^{-1}(xf(\mu)h(x)^{-1}).\]
Thus we may recover the action of $\Hol_n(N)$ on $N$ by
\begin{equation}\label{action}
    x \cdot \mu =f^{-1}(xf(\mu)h(x)^{-1}).
\end{equation}
While $\Phi$ depends on the choice of bijection between $N$ and 
$\{1,\ldots,n\}$, this dependence does not appear in the final construction.

For the rest of this section, let $M$ denote a transitive subgroup of 
$\Hol_n(N)$. We write $1$ for the corresponding element of 
$\{1,\ldots,n\}$, and $1_N$ for the identity element of $N$. 
Note that $\Stab_M(1)=\Stab_M(1_N)$.

\subsubsection{The action for Hopf--Galois structures}
Let $L/K$ be a separable extension of degree $n$ and 
let $E$ be its Galois closure. 
Define $J=\Gal(E/K)$ and $J'=\Gal(E/L)$, and suppose there is 
an isomorphism $\phi: J\to M$ such that $\phi(J')=\Stab_M(1_N)$. 
Now let $\overline{\phi}:J/J' \to N$ be the map given by $\overline{\phi}(gJ')=\phi(g)\cdot(1_N)$ for all $g \in J$, and identify $N$ with the image of the map $\alpha:N \hookrightarrow \Perm(J/J')$ given by $\alpha(\eta)=\overline{\phi}\eta\overline{\phi}^{-1}$ for all $\eta \in N$. As explained in Section \ref{HGS}, the conjugation action of $J$ on $\alpha(N)$ via $\lambda:J \to \Perm(J/J')$ 
gives $H=E[N]^J$, which can in turn be used to give the corresponding Hopf--Galois structure on $L/K$.

Note that this construction is the bijection given in 
\cite[Proposition 1]{Byo96}.
\begin{remark}
As discussed in \cite[\S 5]{MLT24}, we do not require that 
$E$ is the Galois closure of $L/K$, but only that $E/K$ is a 
Galois extension containing $L/K$. 
For simplicity, we present only the Galois closure case here. 
\end{remark}
      
\subsubsection{The action for skew bracoids}
Let $G$ be a group such that there is a homomorphism 
$\delta: G \to \Hol_n(N)$ with $\delta(G)=M$. Given $\cdot$ as above, 
we define an action $\odot$ of $G$ on $N$ by
\[g \odot \mu = \delta(g) \cdot \mu \;\; \forall g \in G, \forall \mu \in N.\]
Therefore, replacing $x$ in (\ref{action}) by $\delta(g)$, we obtain
\[g \odot \mu = f^{-1}(\delta(g)f(\mu)h(\delta(g))^{-1}) \;\; \forall g \in G, \forall \mu \in N.\]
Clearly, 
\[g\odot(\mu+\eta)=(g\odot\mu)-(g\odot1_N)+(g\odot\eta)\]
for all $g \in G$ and $\eta,\mu \in N$. 
Hence 
$(G,N,\odot)$ is a skew bracoid. If $\delta$ is bijective, 
then the corresponding skew bracoid is reduced.

From now on, we identify $\Hol_n(N)$ with $\Hol(N)$. 

\subsection{Realising Step 2}
In this step, we sort $T_n$ into equivalence classes. 
As mentioned in the outline of our algorithm, 
transitive subgroups $M_1 \leq \Hol(N_1)$ and 
$M_2 \leq \Hol(N_2)$ are \textit{equivalent} if $M_1$ is isomorphic 
to $M_2$ via an isomorphism sending $M_1' = \Stab_{M_1}(1_{N_1})$ to 
$M_2' = \Stab_{M_2}(1_{N_2})$, so they are permutation isomorphic. 
Recall that two equivalent transitive 
subgroups correspond to Hopf--Galois structures on the same separable 
extension, and also 
correspond to skew bracoids $(G,N_1,\odot_1)$ and $(G,N_2,\odot_2)$ with 
$\Stab_{\odot_1}(1_{N_1})\cong \Stab_{\odot_2}(1_{N_2})$. 
We first partition the groups into bins corresponding to isomorphism classes
as follows.
\begin{enumerate}
    \item If a group $M$ has a unique identifier in either the 
{\sc SmallGroups} library \cite{SmallGroups}, or the library of 
transitive groups \cite{Deg48}, then we use this identifier to place it 
in a bin. The bins resulting from identification of transitive groups are 
merged by deciding whether representatives are abstractly isomorphic. 
These identifiers are available in {\sc Magma} for many groups of 
order at most 1000, and for transitive groups of degree at most~30.
    \item If $M$ is a $p$-group, then we use its standard presentation 
\cite{OBrien94} to place it in a bin. Two $p$-groups have the same 
standard presentation if and only if they are isomorphic.
    \item Otherwise, we use the intrinsic isomorphism machinery
    available in {\sc Magma} to place $M$ in a bin. 
\end{enumerate}
Assume that the groups are now sorted into equivalence classes up to 
abstract isomorphism. Choose groups $M_1$ and $M_2$ from the same bin. 
{\sc Magma} provides an isomorphism $f:M_1 \to M_2$. But, even if $M_1$ 
and $M_2$ are permutation isomorphic, it may be that $f$ does 
not map $M_1'$ to $M_2'$. In theory, we could decide whether there 
exists $\psi \in \Aut(M_2)$ such that $(\psi \circ f)(M_1')=M_2'$, 
since this is equivalent to running over all isomorphisms between 
$M_1$ and $M_2$. 

As discussed in \cite[\S 2]{CS21}, a more efficient approach is 
the following:
compute the set of images, \texttt{Gprime}, of $M_2'$ under $\Aut(M_2)$ 
and decide if $f(M_1')$ lies in \texttt{Gprime}. If
so, then $f(M_1')=\psi(M_2')$ for some $\psi \in \Aut(M_2)$, and 
$(\psi^{-1} \circ f) : M_1 \to M_2$ is the required 
permutation 
isomorphism.

Given a transitive subgroup $M$ of $\Hol(N)$, Crespo and Salguero \cite{CS21}
used a function provided by Derek Holt which exploits 
a permutation representation of $\Aut(M)$ to construct \texttt{Gprime}. 

We have developed a very efficient procedure to construct \texttt{Gprime}
which incorporates the following improvements.
\begin{itemize}
    \item If $M$ is a finite soluble  or $p$-group, then $\Aut(M)$ is computed 
    using algorithms of \cite{Howden} and \cite{ELO} designed for such groups;
    these are typically faster than the equivalent
    algorithm \cite{CannonHolt} for arbitrary permutation groups.
\item We choose a `small' generating set for the permutation
    representation of $\Aut(M)$.
\item We apply the {\sc Magma} intrinsic \texttt{CanonicalInvariant} to
    each image of $M_2'$ under an automorphism. Based on an algorithm
    of Hulpke and Linton \cite{canonical}, this function takes as input a
    permutation group and outputs a canonical invariant of the group:
    a particular generating set stored as integer sequences. 
    Two permutation groups are equal if and only if they
    have the same canonical invariant. 
\item We store \texttt{Gprime} as an {\it indexed set} of canonical invariants.
    This significantly reduces the time to decide 
    membership of $f(M_1')$ in \texttt{Gprime}.
\end{itemize}

Our implementation returns both 
    \texttt{Gprime} and $|\Aut(M,M')|$, the number of
    $M$-automorphisms fixing $M'=\Stab_M(1_N)$. The latter 
    is used in Step 3 to compute the number of associated
    Hopf--Galois structures.

\subsection{Realising Step 3}
Recall that the number of (equivalence classes of) skew bracoids $(G, N, \odot)$ is 
precisely the number of conjugacy classes of transitive subgroups $M$ of 
$\Hol(N)$ permutation isomorphic to $G$.
Those which are essentially skew braces arise from regular subgroups of 
$\Hol(N)$. To count the number of Hopf--Galois structures of type $N$ 
admitted by a Galois extension of degree $n$ with Galois group $J$, 
we count \textit{all} regular subgroups $M$ of $\Hol(N)$ 
isomorphic to $J$ (so we sum the number of subgroups of $\Hol(N)$ in 
the equivalence class of $J$ computed in Step 2 for each group $N$, 
accounting for the sizes of the conjugacy classes), and multiply this count by 
$|\Aut(J)|/|\Aut(N)|$. (In practice, this factor is computed 
as $|\Aut(M)|/|\Aut(N)|$.) 

In contrast, by Lemma \ref{Byott_num_HGS}, the number 
of Hopf--Galois structures of type $N$ admitted by a \textit{separable} (but not necessarily normal) extension with pair of groups $(J,J')$ is the number of 
transitive subgroups $M$ of $\Hol(N)$ isomorphic to $J$ via an isomorphism 
taking $M'=\Stab_M(1_N)$ to $J'$, for a fixed $N$, 
multiplied by $|\Aut(J,J')|/|\Aut(N)|$. 
(This factor is 
computed as $|\Aut(M,M')|/|\Aut(N)|$;
recall $|\Aut(M,M')|$ was computed in Step 2). 

Let $M \leq \Hol(N)$ be transitive and isomorphic to $J$. 
To count the number of Hopf--Galois structures admitting a bijective
correspondence, we do the following:
\begin{itemize}
\item 
Compute the number of intermediate fields
of $L/K$; do this, using the classical Galois correspondence, by
counting the number of subgroups of $M$ containing~$M'$.

\item 
Compute the number of Hopf subalgebras of $E[N]^J$;
do this by counting how many subgroups $H$ of $N$ have the property 
that $M \leq \mathrm{Norm}_{\Hol(N)}(H)$.
\end{itemize}
If these numbers agree, then the
corresponding Hopf--Galois structure admits a bijective correspondence.

For each transitive subgroup $M$ of $\Hol(N)$, 
the corresponding structure is almost
classical(ly Galois) if and only if $\mathrm{Cent}_{S_n}(N) \leq G$. 
(For justification, consider the discussion at the
end of Section \ref{HGS} and the observation that $N^{\text{opp}}$ is
precisely the centraliser of $N$ in $S_n$.)

\subsection{Exploiting parallelisation}
Our algorithms can readily run in parallel.
We use existing features of {\sc Magma} to realise this:
it uses a \textit{manager--worker} model, where the
`manager' organises and delegates (a sequence of) tasks to 
`workers'. We use parallelisation for the following tasks.

\begin{enumerate}[label=(\roman*)]
    \item\label{step1} Each worker is assigned a different
      group $N$ of order $n$ and is tasked with computing the
      conjugacy classes of transitive subgroups of $\Hol(N)$. 

    \item Each worker is assigned one of
      the transitive groups $M$ computed in \ref{step1}.
      It constructs \texttt{Gprime} and $|\Aut(M,M')|$.
      It also decides whether $G$ is regular, or corresponds to either 
      an almost classical(ly Galois) structure
      or a Hopf--Galois structure admitting a bijective correspondence.

    \item Each worker is assigned the isomorphism problem described in 
      Step 2 for those subgroups of a given order. 

    \item Each worker is assigned an
      equivalence class of transitive subgroups, as computed in Step 2, 
      and enumerates the corresponding Hopf--Galois
      structures and skew bracoids. (This is effectively Step 3 of the
      algorithm, but $|\Aut(M,M')|$ is already known.)
\end{enumerate}

\section{Enumerations and structures}\label{results}
In this section, we present enumerations and structure information
for Hopf--Galois structures and skew bracoids of small degree.
Our results were obtained using the algorithms given in Section \ref{alg}. 

\subsection{Enumerations}\label{enumi-results}
We enumerated the Hopf--Galois structures on separable extensions
and skew bracoids up to degree 200, excluding 
those of degree 64, 96, 128, 144, 160, 162 and 192. 

Tables \ref{tab:enum1}--\ref{tab:enum4}
summarise the enumeration results. 
The first column lists the degree
$n$ of the extension. The second lists the number of groups of
order $n$ up to isomorphism. The third 
lists both the total number of Hopf--Galois structures and skew
bracoids of that degree. The fourth 
lists the number of Hopf--Galois structures on Galois extensions
and the number of skew braces. The fifth lists the 
number of each structure which are almost classically Galois. The sixth 
lists the total number of Hopf--Galois structures giving bijective
correspondence.

The `Hopf--Galois part' of Table \ref{tab:enum1}
recovers some of the results of \cite{CS20, CS21}.
Our results include the first complete enumerations for degrees 
32, 48, 72, 80 and 81.  In particular, the number of 
Hopf--Galois structures on Galois extensions of degree 32 is 61482704. 

\subsection{Structures}
We used the classification algorithm  to construct the  
Hopf--Galois structures  and skew bracoids of degree up to 100, excluding 
those of degree 32, 48, 64, 80, 81 and 96.  
The isomorphisms required in Step 2 dictate that this algorithm is
markedly more expensive than the enumeration alternative. 

For a given integer $n$, our implementation outputs two sequences. 
The first records the transitive subgroups of 
$\Hol(N)$ for each group $N$ of order $n$; the second records 
the equivalence classes of these transitive subgroups and 
the corresponding number of structures.

\ifversion

\begin{center}
\begin{table}[htp]
	\begin{tabular}{|l|l|l|l|l|l|l|l|l|}
		\hline \multirow{2}{3em}{Degree} &
                \multirow{2}{3em}{Types} & \multicolumn{2}{c|}{Total}
                & \multicolumn{2}{c|}{Regular} &
 \multicolumn{2}{c|}{Almost classical} & \multirow{2}{2em}{\centering
 \#BC HGS}\\ \cline{3-8} & & \#HGS & \#Sbracoids &
 \#Gal &\#Sbraces & \#HGS & \#Sbracoids &\\ \hline 
2 & 1 & 1 & 1 & 1 & 1 & 1 & 1 & 1 \\
3 & 1 & 2 & 2 & 1 & 1 & 2 & 2 & 2 \\
4 & 2 & 10 & 8 & 6 & 4 & 6 & 6 & 7 \\
5 & 1 & 3 & 3 & 1 & 1 & 3 & 3 & 3 \\
6 & 2 & 15 & 12 & 8 & 6 & 7 & 6 & 9 \\
7 & 1 & 4 & 4 & 1 & 1 & 4 & 4 & 4 \\
8 & 5 & 348 & 148 & 190 & 47 & 74 & 47 & 147 \\
9 & 2 & 38 & 23 & 12 & 4 & 26 & 20 & 28 \\
10 & 2 & 27 & 20 & 10 & 6 & 11 & 9 & 17 \\
11 & 1 & 4 & 4 & 1 & 1 & 4 & 4 & 4 \\
12 & 5 & 249 & 134 & 102 & 38 & 56 & 46 & 81 \\
13 & 1 & 6 & 6 & 1 & 1 & 6 & 6 & 6 \\
14 & 2 & 32 & 24 & 12 & 6 & 14 & 12 & 19 \\
15 & 1 & 8 & 8 & 1 & 1 & 8 & 8 & 8 \\
16 & 14 & 49913 & 9739 & 25168 & 1605 & 2636 & 815 & 9331 \\
17 & 1 & 5 & 5 & 1 & 1 & 5 & 5 & 5 \\
18 & 5 & 881 & 333 & 289 & 49 & 123 & 89 & 253 \\
19 & 1 & 6 & 6 & 1 & 1 & 6 & 6 & 6 \\
20 & 5 & 434 & 203 & 166 & 43 & 79 & 62 & 156 \\
21 & 2 & 78 & 36 & 28 & 8 & 22 & 18 & 46 \\
22 & 2 & 36 & 24 & 16 & 6 & 14 & 12 & 19 \\
23 & 1 & 4 & 4 & 1 & 1 & 4 & 4 & 4 \\
24 & 15 & 14908 & 4752 & 5618 & 855 & 844 & 504 & 2682 \\
25 & 2 & 106 & 58 & 30 & 4 & 70 & 54 & 74 \\
26 & 2 & 58 & 40 & 18 & 6 & 22 & 18 & 35 \\
27 & 5 & 6699 & 739 & 4329 & 101 & 766 & 283 & 1100 \\
28 & 4 & 388 & 202 & 128 & 29 & 84 & 72 & 143 \\
29 & 1 & 6 & 6 & 1 & 1 & 6 & 6 & 6 \\
30 & 4 & 479 & 304 & 80 & 36 & 99 & 72 & 197 \\
31 & 1 & 8 & 8 & 1 & 1 & 8 & 8 & 8 \\
32 & 51 & 151056415 & 26057416 & 61482704 & 1223061 & 709575 & 50254 & 10338058 \\
33 & 1 & 10 & 10 & 1 & 1 & 10 & 10 & 10 \\
34 & 2 & 59 & 36 & 22 & 6 & 19 & 15 & 33 \\
35 & 1 & 16 & 16 & 1 & 1 & 16 & 16 & 16 \\
36 & 14 & 16512 & 4159 & 5980 & 400 & 1099 & 753 & 2474 \\
37 & 1 & 9 & 9 & 1 & 1 & 9 & 9 & 9 \\
38 & 2 & 57 & 36 & 24 & 6 & 21 & 18 & 29 \\
39 & 2 & 133 & 55 & 46 & 8 & 34 & 28 & 77 \\
40 & 14 & 29534 & 8873 & 8556 & 944 & 1486 & 831 & 5931 \\
41 & 1 & 8 & 8 & 1 & 1 & 8 & 8 & 8 \\
42 & 6 & 1041 & 484 & 374 & 78 & 148 & 112 & 329 \\
43 & 1 & 8 & 8 & 1 & 1 & 8 & 8 & 8 \\
44 & 4 & 466 & 200 & 184 & 29 & 82 & 70 & 141 \\
45 & 2 & 166 & 115 & 12 & 4 & 126 & 104 & 132 \\
46 & 2 & 48 & 24 & 28 & 6 & 14 & 12 & 19 \\
47 & 1 & 4 & 4 & 1 & 1 & 4 & 4 & 4 \\
48 & 52 & 4490340 & 874252 & 1314000 & 66209 & 48347 & 16595 & 402022 \\
49 & 2 & 200 & 97 & 56 & 4 & 122 & 92 & 128 \\
50 & 5 & 3430 & 978 & 969 & 51 & 339 & 235 & 865 \\
\hline
\end{tabular}
\caption{Enumeration results (degrees 2--50)}
\label{tab:enum1}
\end{table}
\end{center}

\begin{center}
\begin{table}[htp]
	\begin{tabular}{|l|l|l|l|l|l|l|l|l|}
		\hline \multirow{2}{3em}{Degree} &
                \multirow{2}{3em}{Types} & \multicolumn{2}{c|}{Total}
                & \multicolumn{2}{c|}{Regular} &
 \multicolumn{2}{c|}{Almost classical} & \multirow{2}{2em}{\centering
 \#BC HGS}\\ \cline{3-8} & & \#HGS & \#Sbracoids &
 \#Gal &\#Sbraces & \#HGS & \#Sbracoids &\\ \hline 
51 & 1 & 14 & 14 & 1 & 1 & 14 & 14 & 14 \\
52 & 5 & 1023 & 409 & 374 & 43 & 161 & 127 & 343 \\
53 & 1 & 6 & 6 & 1 & 1 & 6 & 6 & 6 \\
54 & 15 & 234466 & 16017 & 144467 & 1028 & 3071 & 1953 & 9927 \\
55 & 2 & 192 & 54 & 88 & 12 & 32 & 24 & 94 \\
56 & 13 & 32721 & 9227 & 10010 & 815 & 1620 & 968 & 5747 \\
57 & 2 & 169 & 61 & 64 & 8 & 35 & 27 & 93 \\
58 & 2 & 74 & 40 & 34 & 6 & 22 & 18 & 35 \\
59 & 1 & 4 & 4 & 1 & 1 & 4 & 4 & 4 \\
60 & 13 & 13457 & 4621 & 3128 & 418 & 947 & 668 & 2529 \\
61 & 1 & 12 & 12 & 1 & 1 & 12 & 12 & 12 \\
62 & 2 & 82 & 48 & 36 & 6 & 28 & 24 & 39 \\
63 & 4 & 1875 & 501 & 504 & 47 & 335 & 207 & 749 \\
64 & 267 & ? & ? & ? & ? & ? & ? & ? \\
65 & 1 & 30 & 30 & 1 & 1 & 30 & 30 & 30 \\
66 & 4 & 608 & 352 & 128 & 36 & 118 & 90 & 211 \\
67 & 1 & 8 & 8 & 1 & 1 & 8 & 8 & 8 \\
68 & 5 & 1162 & 391 & 478 & 43 & 145 & 108 & 352 \\
69 & 1 & 10 & 10 & 1 & 1 & 10 & 10 & 10 \\
70 & 4 & 1012 & 608 & 120 & 36 & 198 & 144 & 411 \\
71 & 1 & 8 & 8 & 1 & 1 & 8 & 8 & 8 \\
72 & 50 & 2004057 & 329821 & 646560 & 17790 & 23474 & 13060 & 135087 \\
73 & 1 & 12 & 12 & 1 & 1 & 12 & 12 & 12 \\
74 & 2 & 105 & 60 & 42 & 6 & 33 & 27 & 53 \\
75 & 3 & 1795 & 357 & 597 & 14 & 290 & 230 & 330 \\
76 & 4 & 763 & 304 & 296 & 29 & 127 & 109 & 220 \\
77 & 1 & 20 & 20 & 1 & 1 & 20 & 20 & 20 \\
78 & 6 & 1957 & 828 & 650 & 78 & 244 & 177 & 637 \\
79 & 1 & 8 & 8 & 1 & 1 & 8 & 8 & 8 \\
80 & 52 & 9219006 & 1723150 & 2123494 & 74120 & 93103 & 30020 & 953767 \\
81 & 15 & 15897515 & 68549 & 13781853 & 8436 & 137484 & 7470 & 389829 \\
82 & 2 & 106 & 56 & 46 & 6 & 30 & 24 & 51 \\
83 & 1 & 4 & 4 & 1 & 1 & 4 & 4 & 4 \\
84 & 15 & 21790 & 6371 & 6232 & 606 & 1271 & 925 & 3530 \\
85 & 1 & 29 & 29 & 1 & 1 & 29 & 29 & 29 \\
86 & 2 & 94 & 48 & 48 & 6 & 28 & 24 & 39 \\
87 & 1 & 16 & 16 & 1 & 1 & 16 & 16 & 16 \\
88 & 12 & 41020 & 9120 & 14584 & 800 & 1568 & 934 & 5683 \\
89 & 1 & 8 & 8 & 1 & 1 & 8 & 8 & 8 \\
90 & 10 & 30167 & 10256 & 2890 & 294 & 2165 & 1365 & 6611 \\
91 & 1 & 48 & 48 & 1 & 1 & 48 & 48 & 48 \\
92 & 4 & 706 & 200 & 352 & 29 & 82 & 70 & 141 \\
93 & 2 & 246 & 72 & 100 & 8 & 44 & 36 & 130 \\
94 & 2 & 72 & 24 & 52 & 6 & 14 & 12 & 19 \\
95 & 1 & 24 & 24 & 1 & 1 & 24 & 24 & 24 \\
96 & 231 & ? & ? & ? & ? & ? & ? & ? \\
97 & 1 & 12 & 12 & 1 & 1 & 12 & 12 & 12 \\
98 & 5 & 6824 & 1541 & 2265 & 53 & 576 & 413 & 1350 \\
99 & 2 & 202 & 136 & 12 & 4 & 150 & 122 & 158 \\
100 & 16 & 119542 & 15397 & 55732 & 711 & 3200 & 2165 & 10777 \\
\hline
\end{tabular}
\caption{Enumeration results (degrees 51--100)}
\label{tab:enum2}
\end{table}
\end{center}

\begin{center}
\begin{table}[htp]
	\begin{tabular}{|l|l|l|l|l|l|l|l|l|}
		\hline \multirow{2}{3em}{Degree} &
                \multirow{2}{3em}{Types} & \multicolumn{2}{c|}{Total}
                & \multicolumn{2}{c|}{Regular} &
 \multicolumn{2}{c|}{Almost classical} & \multirow{2}{2em}{\centering
 \#BC HGS}\\ \cline{3-8} & & \#HGS & \#Sbracoids &
 \#Gal &\#Sbraces & \#HGS & \#Sbracoids &\\ \hline 
101 & 1 & 9 & 9 & 1 & 1 & 9 & 9 & 9 \\
102 & 4 & 1039 & 560 & 176 & 36 & 179 & 126 & 389 \\
103 & 1 & 8 & 8 & 1 & 1 & 8 & 8 & 8 \\
104 & 14 & 69723 & 17825 & 19804 & 944 & 3015 & 1695 & 12632 \\
105 & 2 & 296 & 158 & 28 & 8 & 102 & 86 & 188 \\
106 & 2 & 98 & 40 & 58 & 6 & 22 & 18 & 35 \\
107 & 1 & 4 & 4 & 1 & 1 & 4 & 4 & 4 \\
108 & 45 & 4622126 & 237578 & 2496986 & 11223 & 35398 & 18708 & 119695 \\
109 & 1 & 12 & 12 & 1 & 1 & 12 & 12 & 12 \\
110 & 6 & 2233 & 776 & 862 & 94 & 236 & 166 & 647 \\
111 & 2 & 300 & 93 & 118 & 8 & 54 & 42 & 160 \\
112 & 43 & 10425717 & 1739295 & 2608168 & 65485 & 95174 & 32188 & 884165 \\
113 & 1 & 10 & 10 & 1 & 1 & 10 & 10 & 10 \\
114 & 6 & 2233 & 774 & 926 & 78 & 229 & 171 & 597 \\
115 & 1 & 16 & 16 & 1 & 1 & 16 & 16 & 16 \\
116 & 5 & 1628 & 406 & 790 & 43 & 158 & 124 & 372 \\
117 & 4 & 3223 & 793 & 864 & 47 & 537 & 341 & 1253 \\
118 & 2 & 84 & 24 & 64 & 6 & 14 & 12 & 19 \\
119 & 1 & 28 & 28 & 1 & 1 & 28 & 28 & 28 \\
120 & 47 & 2223820 & 571879 & 359042 & 22711 & 27975 & 15464 & 217324 \\
121 & 2 & 332 & 127 & 132 & 4 & 172 & 122 & 182 \\
122 & 2 & 152 & 80 & 66 & 6 & 44 & 36 & 71 \\
123 & 1 & 22 & 22 & 1 & 1 & 22 & 22 & 22 \\
124 & 4 & 1136 & 404 & 464 & 29 & 168 & 144 & 295 \\
125 & 5 & 137732 & 2738 & 117525 & 213 & 5222 & 1299 & 7102 \\
126 & 16 & 71266 & 17700 & 17082 & 990 & 3402 & 2207 & 10855 \\
127 & 1 & 12 & 12 & 1 & 1 & 12 & 12 & 12 \\
128 & 2328 & ? & ? & ? & ? & ? & ? & ? \\
129 & 2 & 306 & 72 & 136 & 8 & 44 & 36 & 154 \\
130 & 4 & 2098 & 1248 & 180 & 36 & 382 & 270 & 915 \\
131 & 1 & 8 & 8 & 1 & 1 & 8 & 8 & 8 \\
132 & 10 & 16716 & 4584 & 4538 & 324 & 948 & 714 & 2260 \\
133 & 1 & 50 & 50 & 1 & 1 & 50 & 50 & 50 \\
134 & 2 & 118 & 48 & 72 & 6 & 28 & 24 & 39 \\
135 & 5 & 23517 & 3781 & 4329 & 101 & 3790 & 1757 & 5000 \\
136 & 15 & 80962 & 17496 & 26254 & 986 & 2844 & 1529 & 13074 \\
137 & 1 & 8 & 8 & 1 & 1 & 8 & 8 & 8 \\
138 & 4 & 788 & 352 & 224 & 36 & 118 & 90 & 211 \\
139 & 1 & 8 & 8 & 1 & 1 & 8 & 8 & 8 \\
140 & 11 & 28694 & 8802 & 5108 & 395 & 1716 & 1194 & 5221 \\
141 & 1 & 10 & 10 & 1 & 1 & 10 & 10 & 10 \\
142 & 2 & 122 & 48 & 76 & 6 & 28 & 24 & 39 \\
143 & 1 & 32 & 32 & 1 & 1 & 32 & 32 & 32 \\
144 & 197 & ? & ? & ? & ? & ? & ? & ? \\
145 & 1 & 30 & 30 & 1 & 1 & 30 & 30 & 30 \\
146 & 2 & 171 & 84 & 78 & 6 & 45 & 36 & 77 \\
147 & 6 & 42964 & 2991 & 23654 & 123 & 987 & 725 & 3257 \\
148 & 5 & 2220 & 615 & 998 & 43 & 243 & 192 & 562 \\
149 & 1 & 6 & 6 & 1 & 1 & 6 & 6 & 6 \\
150 & 13 & 83527 & 20051 & 19903 & 401 & 3965 & 2571 & 12865 \\
\hline
\end{tabular}
\caption{Enumeration results (degrees 101--150)}
\label{tab:enum3}
\end{table}
\end{center}

\begin{center}
\begin{table}[htp]
	\begin{tabular}{|l|l|l|l|l|l|l|l|l|}
		\hline \multirow{2}{3em}{Degree} &
                \multirow{2}{3em}{Types} & \multicolumn{2}{c|}{Total}
                & \multicolumn{2}{c|}{Regular} &
 \multicolumn{2}{c|}{Almost classical} & \multirow{2}{2em}{\centering
 \#BC HGS}\\ \cline{3-8} & & \#HGS & \#Sbracoids &
 \#Gal &\#Sbraces & \#HGS & \#Sbracoids &\\ \hline 
151 & 1 & 12 & 12 & 1 & 1 & 12 & 12 & 12 \\
152 & 12 & 68147 & 13778 & 24104 & 800 & 2408 & 1445 & 8849 \\
153 & 2 & 300 & 213 & 12 & 4 & 232 & 194 & 242 \\
154 & 4 & 1288 & 704 & 192 & 36 & 236 & 180 & 441 \\
155 & 2 & 494 & 108 & 228 & 12 & 64 & 48 & 232 \\
156 & 18 & 48845 & 12425 & 14070 & 782 & 2403 & 1650 & 8052 \\
157 & 1 & 12 & 12 & 1 & 1 & 12 & 12 & 12 \\
158 & 2 & 130 & 48 & 84 & 6 & 28 & 24 & 39 \\
159 & 1 & 16 & 16 & 1 & 1 & 16 & 16 & 16 \\
160 & 238 & ? & ? & ? & ? & ? & ? & ? \\
161 & 1 & 20 & 20 & 1 & 1 & 20 & 20 & 20 \\
162 & 55 & ? & ? & ? & ? & ? & ? & ? \\
163 & 1 & 10 & 10 & 1 & 1 & 10 & 10 & 10 \\
164 & 5 & 2358 & 594 & 1102 & 43 & 224 & 170 & 570 \\
165 & 2 & 439 & 149 & 88 & 12 & 94 & 74 & 231 \\
166 & 2 & 108 & 24 & 88 & 6 & 14 & 12 & 19 \\
167 & 1 & 4 & 4 & 1 & 1 & 4 & 4 & 4 \\
168 & 57 & 2993181 & 666155 & 656810 & 28505 & 33023 & 19363 & 240421 \\
169 & 2 & 565 & 236 & 182 & 4 & 313 & 229 & 325 \\
170 & 4 & 2279 & 1264 & 220 & 36 & 383 & 261 & 977 \\
171 & 5 & 5253 & 994 & 1941 & 80 & 579 & 337 & 1731 \\
172 & 4 & 1376 & 404 & 632 & 29 & 168 & 144 & 295 \\
173 & 1 & 6 & 6 & 1 & 1 & 6 & 6 & 6 \\
174 & 4 & 1258 & 608 & 272 & 36 & 198 & 144 & 403 \\
175 & 2 & 620 & 400 & 30 & 4 & 460 & 380 & 476 \\
176 & 42 & 13505976 & 1737666 & 3914032 & 65466 & 94602 & 31806 & 882938 \\
177 & 1 & 10 & 10 & 1 & 1 & 10 & 10 & 10 \\
178 & 2 & 154 & 56 & 94 & 6 & 30 & 24 & 51 \\
179 & 1 & 4 & 4 & 1 & 1 & 4 & 4 & 4 \\
180 & 37 & 1410782 & 223581 & 257602 & 5849 & 25332 & 15370 & 108626 \\
181 & 1 & 18 & 18 & 1 & 1 & 18 & 18 & 18 \\
182 & 4 & 2898 & 1824 & 216 & 36 & 594 & 432 & 1289 \\
183 & 2 & 446 & 110 & 190 & 8 & 68 & 56 & 228 \\
184 & 12 & 67048 & 9120 & 28864 & 800 & 1568 & 934 & 5683 \\
185 & 1 & 45 & 45 & 1 & 1 & 45 & 45 & 45 \\
186 & 6 & 3324 & 968 & 1478 & 78 & 296 & 224 & 801 \\
187 & 1 & 28 & 28 & 1 & 1 & 28 & 28 & 28 \\
188 & 4 & 1186 & 200 & 688 & 29 & 82 & 70 & 141 \\
189 & 13 & 862051 & 55655 & 316611 & 4560 & 16884 & 4584 & 78401 \\
190 & 4 & 1761 & 912 & 240 & 36 & 297 & 216 & 625 \\
191 & 1 & 8 & 8 & 1 & 1 & 8 & 8 & 8 \\
192 & 1543 & ? & ? & ? & ? & ? & ? & ? \\
193 & 1 & 14 & 14 & 1 & 1 & 14 & 14 & 14 \\
194 & 2 & 202 & 88 & 102 & 6 & 46 & 36 & 83 \\
195 & 2 & 573 & 303 & 46 & 8 & 198 & 168 & 369 \\
196 & 12 & 166477 & 18192 & 82216 & 389 & 4385 & 3158 & 12763 \\
197 & 1 & 9 & 9 & 1 & 1 & 9 & 9 & 9 \\
198 & 10 & 37378 & 11488 & 4624 & 294 & 2422 & 1614 & 6811 \\
199 & 1 & 12 & 12 & 1 & 1 & 12 & 12 & 12 \\
200 & 52 & 11405802 & 1322437 & 3983356 & 23471 & 81275 & 43266 & 683250 \\
\hline
\end{tabular}
\caption{Enumeration results (degrees 151--200)}
\label{tab:enum4}
\end{table}
\end{center}

\else 

\begin{center}
\begin{table}[htp]
	\begin{tabular}{|l|l|l|l|l|l|l|l|l|}
		\hline \multirow{2}{3em}{Degree} &
                \multirow{2}{3em}{Types} & \multicolumn{2}{c|}{Total}
                & \multicolumn{2}{c|}{Regular} &
 \multicolumn{2}{c|}{Almost classical} & \multirow{2}{2em}{\centering
 \#BC HGS}\\ \cline{3-8} & & \#HGS & \#Sbracoids &
 \#Gal &\#Sbraces & \#HGS & \#Sbracoids &\\ \hline 
2 & 1 & 1 & 1 & 1 & 1 & 1 & 1 & 1\\ 3 & 1 & 2 & 2 & 1 & 1
                & 2 & 2 & 2\\ 4 & 2 & 10 & 8 & 6 & 4 & 6 & 6 & 7\\ 5 &
                1 & 3 & 3 & 1 & 1 & 3 & 3 & 3\\ 6 & 2 & 15 & 12 & 8 &
                6 & 7 & 6 & 9\\ 7 & 1 & 4 & 4 & 1 & 1 & 4 & 4 & 4\\ 8
                & 5 & 348 & 148 & 190 & 47 & 74 & 47 & 147\\ 9 & 2 &
                38 & 23 & 12 & 4 & 26 & 20 & 28\\ 10 & 2 & 27 & 20 &
                10 & 6 & 11 & 9 & 17\\ 11 & 1 & 4 & 4 & 1 & 1 & 4 & 4
                & 4\\ 12 & 5 & 249 & 134 & 102 & 38 & 56 & 46 &
                81\\ 13 & 1 & 6 & 6 & 1 & 1 & 6 & 6 & 6\\ 14 & 2 & 32
                & 24 & 12 & 6 & 14 & 12 & 19\\ 15 & 1 & 8 & 8 & 1 & 1
                & 8 & 8 & 8\\ 16 & 14 & 49913 & 9739 & 25168 & 1605 &
                2636 & 815 & 9331\\ 17 & 1 & 5 & 5 & 1 & 1 & 5 & 5 &
                5\\ 18 & 5 & 881 & 333 & 289 & 49 & 123 & 89 &
                253\\ 19 & 1 & 6 & 6 & 1 & 1 & 6 & 6 & 6\\ 20 & 5 &
                434 & 203 & 166 & 43 & 79 & 62 & 156\\ 21 & 2 & 78 &
                36 & 28 & 8 & 22 & 18 & 46\\ 22 & 2 & 36 & 24 & 16 & 6
                & 14 & 12 & 19\\ 23 & 1 & 4 & 4 & 1 & 1 & 4 & 4 &
                4\\ 24 & 15 & 14908 & 4752 & 5618 & 855 & 844 & 504 &
                2682\\ 25 & 2 & 106 & 58 &30 & 4 & 70 & 54 & 74\\ 26 &
                2 & 58 & 40 &18 & 6 & 22 & 18 & 35\\ 27 & 5 & 6699 &
                739 &4329 & 101 & 766 & 283 & 1100\\ 28 & 4 & 388 &
                202 & 128 & 29 & 84 & 72 & 143\\ 29 & 1 & 6 & 6 & 1 &
                1 & 6 & 6 & 6\\ 30 & 4 & 479 & 304 & 80 & 36 & 99 & 72
                & 197\\ 31 & 1 & 8 & 8 & 1 & 1 & 8 & 8 & 8\\
        32 & 51 & 151056415 & 26057416 & 61482704 & 1223061 & 709575 & 50254 & 10338058\\
        33 & 1 & 10 & 10 & 1 & 1 & 10 & 10 & 10\\ 34 & 2 & 59 & 36 & 22 & 6 & 19 & 15 &
        33\\ 35 & 1 & 16 & 16 & 1 & 1 & 16 & 16 & 16\\ 36 & 14 & 16512
        & 4159 & 5980 & 400 & 1099 & 753 & 2474\\ 37 & 1 & 9 & 9 & 1 &
        1 & 9 & 9 & 9\\ 38 & 2 & 57 & 36 & 24 & 6 & 21 & 18 & 29\\ 39
        & 2 & 133 & 55 & 46 & 8 & 34 & 28 & 77\\ 40 & 14 & 29534 &
        8873 & 8556 & 944 & 1486 & 831 & 5931\\ 41 & 1 & 8 & 8 & 8 & 1
        & 8 & 8 & 8\\ 42 & 6 & 1041 & 484 & 374 & 78 & 148 & 112 &
        329\\ 43 & 1 & 8 & 8 & 1 & 1 & 8 & 8 & 8\\ 44 & 4 & 466 & 200
        & 184 & 29 & 82 & 70 & 141\\ 45 & 2 & 166 & 115 & 12 & 4 & 126
        & 104 & 132\\ 46 & 2 & 48 & 24 & 28 & 6 & 14 & 12 & 19\\ 47 &
        1 & 4 & 4 & 1 & 1 & 4 & 4 & 4\\ 48 & 52 & 4490340 & 874252 &
        1314000 & 66209 & 48347 & 16595 & 402022\\ 49 & 2 & 200 & 97 &
        56 & 4 & 122 & 92 & 128\\ 50 & 5 & 3430 & 978 & 969 & 51 & 339
        & 235 & 865\\ 
                \hline
	\end{tabular}
    \caption{Enumeration results (degrees 2--50)}
\label{tab:enum1}
\end{table}
\end{center}

\begin{center}
\begin{table}[htp]
    \begin{tabular}{|l|l|l|l|l|l|l|l|l|}
        \hline \multirow{2}{3em}{Degree} & \multirow{2}{3em}{Types} &
        \multicolumn{2}{c|}{Total} & \multicolumn{2}{c|}{Regular} &
        \multicolumn{2}{c|}{Almost classical} & \multirow{2}{2em}{\centering 
\#BC HGS}\\ \cline{3-8} & & \#HGS & \#Sbracoids & \#Gal
        &\#Sbraces & \#HGS & \#Sbracoids &\\ \hline
51 & 1 & 14 & 14 & 1 & 1 & 14 & 14 & 14\\ 52 & 5
        & 1023 & 409 & 374 & 43 & 161 & 127 & 343\\ 53 & 1 & 6 & 6 & 1
        & 1 & 6 & 6 & 6\\ 54 & 15 & 234466 & 16017 & 144467 & 1028 &
        3071 & 1953 & 9927\\ 55 & 2 & 192 & 54 & 88 & 12 & 32 & 24 &
        94\\ 56 & 13 & 32721 & 9227 & 10010 & 815 & 1620 & 968 &
        5747\\ 57 & 2 & 169 & 61 & 64 & 8 & 35 & 27 & 93\\ 58 & 2 & 74
        & 40 & 34 & 6 & 22 & 18 & 35\\ 59 & 1 & 4 & 4 & 1 & 1 & 4 & 4
        & 4\\ 60 & 13 & 13457 & 4621 & 3128 & 418 & 947 & 668 &
        2529\\ 61 & 1 & 12 & 12 & 1 & 1 & 12 & 12 & 12\\ 62 & 2 & 82 &
        48 & 36 & 6 & 28 & 24 & 39\\ 63 & 4 & 1875 & 501 & 504 & 47 &
        335 & 207 & 749\\ 64 & 267 & ? & ? & ? & ? & ? & ? & ?\\ 65 &
        1 & 30 & 30 & 1 & 1 & 30 & 30 & 30\\
        66 & 4 & 608 & 352 & 128 & 36 & 118 & 90 & 211\\ 67 & 1 & 8 & 8 & 1
        & 1 & 8 & 8 & 8\\ 68 & 5 & 1162 & 391 & 478 & 43 & 145 & 108 &
        352\\ 69 & 1 & 10 & 10 & 1 & 1 & 10 & 10 & 10\\ 70 & 4 & 1012
        & 608 & 120 & 36 & 198 & 144 & 411\\ 71 & 1 & 8 & 8 & 1 & 1 &
        8 & 8 & 8\\ 72 & 50 & 2004057 & 329821 & 646560 & 17790 &
        23474 & 13060 & 135087\\ 73 & 1 & 12 & 12 & 1 & 1 & 12 & 12 &
        12\\ 74 & 2 & 105 & 60 & 42 & 6 & 33 & 27 & 53\\ 75 & 3 & 1795
        & 357 & 597 & 14 & 290 & 230 & 330\\ 76 & 4 & 763 & 304 & 296
        & 29 & 127 & 109 & 220\\ 77 & 1 & 20 & 20 & 1 & 1 & 20 & 20 &
        20\\ 78 & 6 & 1957 & 828 & 650 & 78 & 244 & 177 & 637\\ 79 & 1
        & 8 & 8 & 1 & 1 & 8 & 8 & 8\\ 80 & 52 & 9219006 & 1723150 &
        2123494 & 74120 & 93103 & 30020 & 953767\\ 81 & 15 & 15897515
        & 68549 & 13781853 & 8436 & 137484 & 7470 & 389829\\ 82 & 2 &
        106 & 56 & 46 & 6 & 30 & 24 & 51\\ 83 & 1 & 4 & 4 & 1 & 1 & 4
        & 4 & 4\\ 84 & 15 & 21790 & 6371 & 6232 & 606 & 1271 & 925 &
        3530\\ 85 & 1 & 29 & 29 & 1 & 1 & 29 & 29 & 29\\ 86 & 2 & 94 &
        48 & 48 & 6 & 28 & 24 & 39\\ 87 & 1 & 16 & 16 & 1 & 1 & 16 &
        16 & 16\\ 88 & 12 & 41020 & 9120 & 14584 & 800 & 1568 & 934 &
        5683\\ 89 & 1 & 8 & 8 & 1 & 1 & 8 & 8 & 8\\ 90 & 10 & 30167 &
        10256 & 2890 & 294 & 2165 & 1365 & 6611\\ 91 & 1 & 48 & 48 & 1
        & 1 & 48 & 48 & 48\\ 92 & 4 & 706 & 200 & 352 & 29 & 82 & 70 &
        141\\ 93 & 2 & 246 & 72 & 100 & 8 & 44 & 36 & 130\\ 94 & 2 &
        72 & 24 & 52 & 6 & 14 & 12 & 19\\ 95 & 1 & 24 & 24 & 1 & 1 &
        24 & 24 & 24\\ 96 & 231 & ? & ? & ? & ? & ? & ? & ?\\ 97 & 1 &
        12 & 12 & 1 & 1 & 12 & 12 & 12\\ 98 & 5 & 6824 & 1541 & 2265 &
        53 & 576 & 413 & 1350\\ 99 & 2 & 202 & 136 & 12 & 4 & 150 &
        122 & 158\\ 100 & 16 & 119542 & 15397 & 55732 & 711 & 3200 & 2165 & 10777 \\ \hline
    \end{tabular}
    \caption{Enumeration of results (degrees 51--100)}
\label{tab:enum2}
\end{table}
\end{center}

\fi

In more detail, for a given $N$, we record the following data.
\begin{itemize}
\item The equivalent permutation groups (up to conjugacy) 
which are subgroups of $\Hol(N)$ and the total number of such subgroups; 
we identify which are regular.

\item Those subgroups which correspond to Hopf--Galois structures giving 
a bijective Hopf--Galois correspondence. 

\item Those subgroups which correspond to almost classical structures.

\item The total number of Hopf--Galois structures; the number 
which admit bijective correspondence; the number which 
are almost classically Galois; and the number of corresponding 
field extensions which are either Galois or non-normal.

\item The number of skew bracoids; the number which are almost classical;
and whether these are essentially skew braces.
\end{itemize}
Recall that equivalent permutation groups give rise to Hopf--Galois structures on the same separable extensions, and also to (equivalence classes of) skew bracoids with the same transitive group $G$ and isomorphic stabilisers.
 
Since the amount of data constructed for each group is large, 
we do not present it here, but instead refer 
to \cite{GITPAGE} for the detailed results. 
These are recorded in a format
permitting ready access and further study within {\sc Magma}.

\begin{example}
We illustrate the stored data by reference to the groups of order 4. 
Let $N_1$ and $N_2$  be the cyclic and elementary abelian groups 
of order 4 respectively. 
The stored data 
shows that $T_1 = \langle (1, 3, 2, 4) \rangle \leq \Hol(N_1)$ 
and $T_2 = \langle (1,3,2,4), (1,2)(3,4) \rangle \leq \Hol(N_2)$ are 
isomorphic regular permutation groups. The conjugacy classes of $T_1$ 
and $T_2$ have sizes one and three respectively. The following 
information is recorded about the corresponding Hopf--Galois structures and 
skew bracoids: every Galois extension  $L/K$ such that 
$\Gal(L/K)\cong T_1 \cong T_2$ admits  one Hopf--Galois structure of 
type $N_1$, corresponding to $T_1$ (which admits a bijective 
correspondence and is almost classically Galois), 
and one Hopf--Galois structure  of type $N_2$, corresponding to $T_2$ 
(which admits a bijective correspondence, but is not almost classically 
Galois). Suppose that $G$ is isomorphic to $T_i$. 
Up to isomorphism, there is one equivalence class of skew bracoids 
of the form $(G,N_1,\odot_1)$ and one of the form $(G,N_2,\odot_2)$ (given by 
transitive actions that can be recovered as discussed in Section 
\ref{action-recovery}). Both (classes of) skew bracoids are essentially 
skew braces, the former is also almost classical.
\end{example}

\subsection{The computations}
The computations were carried out using {\sc Magma} V2.29-3 
on a 3.1GHz machine running Ubuntu 24.04 with 128 processors and 
shared memory of 984GB. 
Using the manager-worker model in {\sc Magma}, we exploited
up to 100 of these.  The enumerations 
used 414 days of CPU time, the most expensive 
were for degrees 32, 80, 176 and 200.  The classifications -- restricted
to a significantly smaller set of degrees -- used  17.5 days of CPU time; 
that for degree 72 used 17 days. Detailed information on the individual 
computations is available  at \cite{GITPAGE}.

We excluded a group order when we could not complete the calculations for 
all of the groups of this order. The principal 
limitation in completing an order is typically constructing the 
conjugacy classes of all transitive subgroups of the related holomorphs;
the isomorphisms required pose an additional challenge to the 
structure algorithm.
But many of the groups from an excluded order can be processed readily 
using our implementations.

\section{
Hopf--Galois structures and skew bracoids of degree $2pq$}\label{ord2pq}

Let $p$ and $q$ be distinct odd primes with $p>q$. Let $N$ and $N'$ be
groups of order $2pq$ and let $G$ be a finite group. 
The pair $(G,N)$ is 
\textit{admissible} if $G$ can be embedded as a transitive subgroup of 
$\Hol(N)$.  
By the discussion in Section \ref{prelim}, 
the admissible pair $(G,N)$ corresponds to:
\begin{enumerate}
    \item the existence of a Hopf--Galois structure of type $N$ 
and group $G$, and
    \item the existence of a skew bracoid $(G,N,\odot)$. 
\end{enumerate}
In particular, the statement
\begin{quotation}
    \textit{
If $(G,N)$ is admissible, then $(G,N')$ is admissible.
}
\end{quotation}
may be interpreted in one of two ways:
\begin{center}
\begin{enumerate}
    \item \textit{Let $L/K$ be a separable extension of degree $|N|$
and Galois closure $E$
      such that $\Gal(E/K) \cong G$. If $L/K$ admits a Hopf--Galois
      structure of type $N$, then it also admits a Hopf--Galois
      structure of type $N'$.}
    \item \textit{If $(G,N,\odot)$ is a skew bracoid, then
      there is a transitive action $\odot'$ such that $(G,N',\odot')$
      is a skew bracoid.}
\end{enumerate}
\end{center}

Given an admissible pair $(G,N)$, we now investigate 
conditions for $(G,N')$ to be also admissible.
Our results 
are summarised in Figure \ref{fig:2pq-results}. 
An arrow from $(G,N)$ to $(G,N')$ signifies 
that if $(G,N)$ is admissible, then $(G,N')$ is admissible. 
The arrows labelled $2pq(p-1)$ 
apply if and only if $|G|$ divides $2pq(p-1)$. 
The groups $\C_2 \times (\C_p \rtimes \C_q)$ and $(\C_p \rtimes \C_{2q})$ exist only when $p \equiv 1 \bmod q$.
\begin{figure}
    \centering
    \[\begin{tikzcd}
	   & {(G,\D_{pq})} \\
	   {(G,\C_p \times \D_q)} && {(G,\C_q \times \D_p)} \\
	   & {(G,\C_{2pq})} \\
	   {(G,\C_2\times(\C_p \rtimes \C_q))} && {(G,\C_p \rtimes \C_{2q})}
	   \arrow[hook, from=2-1, to=1-2]
	   \arrow[hook', from=2-3, to=1-2]
	   \arrow[hook', from=3-2, to=2-1]
	   \arrow[hook, from=3-2, to=2-3]
	   \arrow["{2pq(p-1)}", hook, from=4-1, to=3-2]
	   \arrow["{2pq(p-1)}", <->, from=4-1, to=4-3]
      \arrow["{2pq(p-1)}"', hook', from=4-3, to=3-2]
    \end{tikzcd}\]
    \caption{Summary of results for degree $2pq$}
    \label{fig:2pq-results}
\end{figure}

\subsection{The groups of order $2pq$}
We first describe the groups of order $2pq$ and their 
automorphism groups. 
Since $2pq$ is squarefree, the following is a consequence 
of \cite[Lemmas 2.1 and 4.1]{AB20}.
Four (isomorphism types of) groups of order $2pq$
exist for all odd primes $p$ and $q$ where $p > q$. 
Subject to the convention that $g^h = hgh^{-1}$,  
these are: 
    \begin{align*} 
        \begin{split}
N_1&=\C_{2pq} = \langle x_1 \mid x_1^{2pq}=1 \rangle,\\ 
N_2&=\C_p\times \D_q = \langle x_2, r_2, s_2
            \mid x_2^p=r_2^q=s_2^2=1, r_2^{s_2}=r_2^{-1},
            x_2^{s_2}=x_2^{r_2}=x_2 \rangle,\\ 
N_3&=\C_q \times
            \D_p= \langle x_3, r_3, s_3 \mid x_3^q=r_3^p=s_3^2=1,
            r_3^{s_3}=r_3^{-1}, x_3^{s_3}=x_3^{r_3}=x_3
            \rangle,\\ 
N_4&=\D_{pq} = \langle r_4,s_4 \mid
            r_4^{pq}=s_4^2=1, r_4^{s_4}=r_4^{-1} \rangle.
        \end{split}
    \end{align*}
If $p \equiv 1 \bmod q$, then there are two additional groups,
where 
$k$ has order $q$ modulo~$p$: 
    \begin{align*} 
        \begin{split}
            N_5&=\C_2 \times (\C_p \rtimes \C_q) = \langle
            x_5,r_5,s_5 \mid x_5^2=r_5^p=s_5^q=1,
            r_5^{s_5}=r_5^k, x_5^{s_5}=x_5^{r_5}=x_5
            \rangle, \\ 
N_6&=\C_p \rtimes \C_{2q} = \langle r_6,s_6
            \mid r_6^p=s_6^{2q}=1, r_6^{s_6}=r_6^{-k} \rangle.
        \end{split}
    \end{align*}
We now describe their automorphism groups.
    \begin{enumerate}[label=\arabic*)]
        \item We generate $\Aut(N_1)$ by $\sigma_1$
          and $\rho_1$, where $\sigma_1(x_1)=x_1^{a_p},
          \rho_1(x_1)=x_1^{a_q}$, and $a_p$ and $a_q$ have orders
          $p-1$ and $q-1$ modulo $2pq$ respectively. So
          $|\Aut(N_1)|=(p-1)(q-1)$.
        \item Note that $\Aut(N_2) \cong \Aut(\C_p) \times
          \Aut(\D_q)$. We generate $\Aut(N_2)$ by 
          $\sigma_2$, $\phi_2$, and $\psi_{b,2}$, where $\sigma_2(x_2)=x_2^a$,
          $\phi_2(s_2)=r_2s_2$ and $\psi_{b,2}(r_2)=r_2^b$ such that
          $a$ and $b$ have orders $(p-1) \bmod{p}$ and $(q-1) \bmod{q}$
          respectively, and the maps fix the remaining
          generators. So $|\Aut(N_2)|=q(p-1)(q-1)$.
        \item Note that $\Aut(N_3) \cong \Aut(\C_q) \times
          \Aut(\D_p)$. We generate $\Aut(N_3)$ by 
          $\sigma_3$, $\phi_3$, and $\psi_{b,3}$, where $\sigma_3(x_3)=x_3^a$,
          $\phi_3(s_3)=r_3s_3$ and $\psi_{b,3}(r_3)=r_3^b$ such that
          $a$ and $b$ have orders $(q-1) \bmod{q}$ and $(p-1) \bmod{p}$
          respectively, and the maps fix the remaining
          generators. So $|\Aut(N_3)|=p(p-1)(q-1)$.
        \item We generate $\Aut(N_4)$ by 
          $\phi_4$, $\psi_{b_p,4}$, and $\psi_{b_q,4}$, where
          $\phi_4(s_4)=r_4s_4$ and $\psi_{b_p,4}(r_4)=r_4^{b_p}$ and
          $\psi_{b_q,4}(r_4)=r_4^{b_q}$ such that $b_p$ and $b_q$ have
          orders $p-1$ and $q-1$ modulo ${pq}$ respectively, and the maps
          fix the remaining generators. So
          $|\Aut(N_4)|=pq(p-1)(q-1)$.
        \item Note that $\Aut(N_5) \cong \Aut(\C_2) \times
          \Aut(\C_p \rtimes \C_q) \cong \Aut(\C_p \rtimes \C_q)$. We
          generate $\Aut(N_5)$ by $\phi_5$ and
          $\psi_{b,5}$, where $\phi_5(s_5)=r_5s_5$ and
          $\psi_{b,5}(r_5)=r_5^b$ such that $b$ has order
          $(p-1)\bmod{p}$, and the maps fix the remaining
          generators. So $|\Aut(N_5)|=p(p-1)$.
        \item We generate $\Aut(N_6)$ by $\phi_6$
          and $\psi_{b,6}$, where $\phi_6(s_6)=r_6s_6$ and
          $\psi_{b,6}(r_6)=r_6^b$ such that $b$ has order
          $(p-1)\bmod{p}$, and the maps fix the remaining
          generators. So $|\Aut(N_6)|=p(p-1)$.
    \end{enumerate}
Observe that $\Hol(N_i)$ for $1 \leq i \leq 6$ has a 
unique Sylow $p$-subgroup. 

\subsection{Transitive subgroups of degree $2pq$}

\begin{proposition}\label{four_types}
Let $G$ be a finite group.
    \begin{enumerate}[label=$\bullet$]
        \item If $(G,\C_{2pq})$ is admissible, then so are $(G,\C_p \times \D_q)$, $(G,\C_q \times \D_p)$ and $(G,\D_{pq})$.
        \item If either $(G,\C_p \times \D_q)$ or $(G,\C_q \times \D_p)$ is admissible, then so is $(G,\D_{pq})$.
    \end{enumerate}
\end{proposition}
\begin{proof}
For $i=2,3,4$, we first 
identify 
regular subgroups 
$M_i$ of $\Hol(N_i)$ which are isomorphic to $C_{2pq}$ and whose
normalisers in $\Perm(N_i)$ are contained in $\Hol(N_i)$. In
particular, $|\Hol(N_1)|=2pq(p-1)(q-1)$, so if 
the normaliser of $M_i$ in $\Hol(N_i)$ has this order, then we
have finished.
Consider the following regular subgroups $M_i$ of $\Hol(N_i)$ such
that $M_i \cong \C_{2pq}$:
    \begin{align*}
        M_2 & = \langle x_2,r_2, (s_2,\psi_{-1,2}) \rangle,\\ 
        M_3 & = \langle x_3,r_3, (s_3,\psi_{-1,3}) \rangle,\\ 
        M_4 & = \langle r_4, (s_4,\psi_{-1,4}) \rangle.
    \end{align*}
    Here $\psi_{-1,i}$ is a power of $\psi_{b,i}$ such that
    $\psi_{-1,i}(r_i)=r_i^{-1}$ for $i \in \{2,3\}$, 
and $\psi_{-1,4}=\psi^a_{b_p,4}\psi^b_{b_q,4}$ where $a=(p-1)/2$ 
and $b=(q-1)/2$, thus $\psi_{-1,4}(r_4)=r_4^{-1}$. 
Clearly, 
\begin{itemize}
\item $\langle x_2,r_2,s_2,\sigma_2,\psi_{b,2}\rangle \leq 
\Hol(N_2)= \Hol(\C_p \times \D_q)$ normalises $M_2$; 

\item 
$\langle x_3,r_3,s_3, \sigma_3,\psi_{b,3}\rangle \leq \Hol(N_3)= 
\Hol(\C_q \times \D_p)$ 
normalises $M_3$; 

\item 
$\langle s_4,r_4, \psi_{b_p,4}, \psi_{b_q,4} \rangle \leq \Hol(N_4) = 
\Hol(\D_{pq})$ normalises $M_4$. 
\end{itemize}
Each normaliser has order $2pq(p-1)(q-1)$.

    We now identify regular subgroups of $\Hol(N_4)$
    isomorphic to $\C_p \times \D_q$ and $\C_q \times \D_p$
    respectively, such that their normalisers in $\Perm(N_4)$ are
    contained in $\Hol(N_4)$. 
Consider 
    \begin{align*}
        \C_p \times \D_q & \cong J_2=\langle r_4, (s_4,\psi^{(p-1)/2}_{b_p,4})
        \rangle,\\ 
        \C_q \times \D_p & \cong J_3=\langle r_4, (s_4,\psi^{(q-1)/2}_{b_q,4}) \rangle.
    \end{align*}
Observe that 
    $\langle r_4,s_4,\psi_{b_p,4},\psi_{b_q,4},\phi_4^p \rangle 
\leq \Hol(N_4)$ has 
    order $2pq^2(p-1)(q-1)$ and normalises $J_2$; 
also $\langle r_4,s_4,\psi_{b_p,4},\psi_{b_q,4},\phi_4^q \rangle \leq 
\Hol(N_4)$ has 
    order $2p^2q(p-1)(q-1)$ and normalises $J_3$. 
Since  these orders
coincide with those of $\Hol(N_2)$ and 
$\Hol(N_3)$ respectively, we have finished.
\end{proof}

\begin{proposition}\label{two_types}
    Let $G$ be a finite group. If both $(G,\C_p \times \D_q)$ and $(G,\C_q \times \D_p)$ are admissible, then $(G,\C_{2pq})$ is also admissible.
\end{proposition}
\begin{proof}
    Let $G$ be a group which embeds transitively in both $\Hol(N_2)=\Hol(\C_p \times \D_q)$
    and $\Hol(N_3)=\Hol(\C_q \times \D_p)$. We show that $G$ embeds transitively in
    $\Hol(N_1)=\Hol(C_{2pq})$. 
Since $|\Hol(N_2)|=2pq^2(p-1)(q-1)$ and
    $|\Hol(N_3)|=2p^2q(p-1)(q-1)$, we deduce that $|G|$
    divides $2pq(p-1)(q-1)=|\Hol(\C_{2pq})|$. 

    We claim that each 
    transitive subgroup of $\Hol(N_3)$ of order $2pq(p-1)(q-1)$
    is isomorphic to $\Hol(N_1)$. Assume this claim. 
If $H \leq \Hol(N_3)$ is 
    transitive of order dividing $2pq(p-1)(q-1)$, 
then we can extend $H$ to a subgroup of
    order $2pq(p-1)(q-1)$ by adding the appropriate generators of
    $\Aut(N_3)$ to $H$. Hence, by the claim, 
    each transitive subgroup of $\Hol(N_3)$ of order dividing
    $2pq(p-1)(q-1)$ must be a subgroup of a group isomorphic to
    $\Hol(N_1)$. 

    Now we prove the claim. Recall that
    \[\Hol(N_3)=\langle x_3,r_3,s_3, 
\sigma_3, \phi_3, \psi_{b,3} \rangle,\]
    where $\Hol(\C_q) = \langle x_3, \sigma_3 \rangle$ 
    has order $q(q-1)$
    and $\Hol(\D_p) = \langle r_3,s_3,\phi_3, \psi_{b,3} \rangle$ has
    order $2p^2(p-1)$. Let $M\leq \Hol(N_3)$ be 
transitive of order 
    $2pq(p-1)(q-1)$. Now $P=\langle r_3, \phi_3 \rangle$ is 
the unique
    Sylow $p$-subgroup of $\Hol(N_3)$, so $M$ must contain a subgroup of
    $P$ of order $p$; it is either $\langle \phi_3 \rangle$
    or $\langle (r_3,\phi_3^u) \rangle$ for some $0 \leq u \leq p-1$. 
    The transitivity of $M$ allows us to ignore $\langle
    \phi_3 \rangle$, so $M$ contains $\langle
    (r_3,\phi_3^u)\rangle$ for some $u$. Further, $M$ must have a Hall
    $p'$-subgroup conjugate to $\langle x_3, s_3,\sigma_3,\psi_{b,3}
    \rangle$. Therefore, up to conjugacy, 
    \[M= \langle (r_3,\phi_3^u), x_3, s_3,\sigma_3,\psi_{b,3} \rangle\]
    for some $0 \leq u \leq p-1$. 
    If $u \not\in \{0, p - 2\}$, then 
    conjugating $(r_3,\phi_3^u)$ by $s_3$ implies that 
    $\phi_3 \in M$. Hence $p^2$ divides $|M|$, a contradiction.

    Two possibilities (up to conjugation) remain: 
    \begin{enumerate}[label=(\roman*)]
        \item $M = \langle x_3,r_3,s_3,\sigma_3,\psi_{b,3} \rangle =
          \langle x_3,r_3,(s_3,\psi_{-1,2}),\sigma_3,\psi_{b,3}
          \rangle \cong \Hol(\C_{2pq});$ 
        \item $M = \langle
          x_3,(r_3,\phi_3^{p-2}),s_3,\sigma_3,\psi_{b,3} \rangle \cong
          \Hol(\C_{2pq})$. \qedhere
    \end{enumerate}
\end{proof}
An immediate consequence is that $(G,N_1)$ is admissible if and
only if both $(G,N_2)$ and $(G,N_3)$ are admissible.

Now suppose that $p \equiv 1 \bmod q$. 
Recall that there are two additional groups: $N_5=\C_2 \times (\C_p
\rtimes \C_q)$ and $N_6=\C_p \rtimes \C_{2q}$.
\begin{proposition}
    Let $G$ be a finite group and let $i \in \{5,6\}$. 
If $(G,N_i)$ is admissible, then for $1 \leq j \leq 6$ with $j \neq i$,
    \[(G,N_j) \text{ is admissible} \Longleftrightarrow |G| \text{ divides } 2pq(p-1).\]
\end{proposition}
\begin{proof}
    Let $G$ be a group that embeds transitively in $\Hol(N_i)$ for
    $i=5$ or $i=6$.

    Suppose that $G$ also embeds transitively in $\Hol(N_j)$ for
    some $j \in \{1,\ldots,6\}\setminus\{i\}$. By Proposition
    \ref{two_types}, we need only assume that $j \in \{4,5,6\}$. 
    Now $|G|$ divides
    $2p^2q(p-1)=\gcd(|\Hol(N_i)|,|\Hol(N_j)|)$. We 
    show that $G$ cannot contain a subgroup of order $p^2$, 
hence $|G|$ must divide $2pq(p-1)$. 
For a contradiction, we suppose otherwise. 
If $M_j$ is the image of the transitive
    embedding of $G$ into $\Hol(N_j)$, 
then $P_j=\langle r_j^q, \phi_j^q \rangle \leq M_j$, 
since it is the unique Sylow
    $p$-subgroup (of order $p^2$) in each case.

    We now make two claims: 

    \medskip
   \noindent
    \textit{Claim $1$: Let $j,j' \in \{4,5,6\}$. 
If $A_j \leq \Aut(\langle r_j^q \rangle)$ and $A_{j'} \leq \Aut(\langle
      r_{j'}^q \rangle)$ have the same order, 
then $M_j \rtimes A_j \cong M_{j'} \rtimes A_{j'}$.}

    \medskip
   \noindent
    \textit{Claim $2$: No transitive subgroup of $\Hol(N_4)$
      is isomorphic to either $\Hol(N_5)$ or $\Hol(N_6)$.}

    \medskip
Assume these claims. Let $A_i=\Aut(\langle r_i \rangle)$ for $i = 5, 6$, 
and note that $\Hol(N_i)=M_i \rtimes A_i$. Since $M_5 \cong M_6$, 
Claim 1 implies that 
\[\Hol(N_5)=M_5 \rtimes A_5 \cong M_6 \rtimes A_6 =\Hol(N_6),\]
a contradiction. Further, if $A_4=\Aut(\langle r_4^q \rangle)$, then 
\[M_4 \rtimes A_4 \cong M_5 \rtimes A_5 = \Hol(N_5).\]
Now $M_4 \rtimes A_4$ is a subgroup of $\Hol(N_4)$ isomorphic to 
$\Hol(N_5)$, which contradicts Claim 2.
Hence, if $G$ contains a subgroup of order $p^2$, then 
it cannot simultaneously embed transitively in $\Hol(N_i)$ and $\Hol(N_j)$.

\ifversion 
    We first prove Claim 1.  Let $M=M_j, M'=M_{j'}$, $A=A_j, A'=A_{j'}$ and $P=P_j, P'=P_{j'}$ with $j,j' \in \{4,5,6\}$ as above.
    Let $z,z'$ be such that $A=\langle z \rangle$ and $A'=\langle z' \rangle$, 
and let $\alpha:M \to M'$ and $\beta:A \to A'$ be isomorphisms with 
$\beta(z)=z'$. Define the map $f:M \rtimes A \to M'\rtimes A'$ by
    \[f((m,a))=(\alpha(m),\beta(a)).\]
    We show that $f$ is an isomorphism. 
    Clearly $f$ is bijective, so we need only check that
    \[f((m_1,a_1)(m_2,a_2))=f((m_1,a_1))f((m_2,a_2)),\]
or equivalently
    \[(\alpha(m_1a_1(m_2)),\beta(a_1a_2))=(\alpha(m_1)\beta(a_1)(\alpha(m_2)),\beta(a_1)\beta(a_2))\]
    for all $m_1,m_2 \in M$ and all $a_1,a_2 \in A$. As $\alpha$ and $\beta$ are isomorphisms, this is the same as checking that 
    \[\alpha(a_1(m_2))=\beta(a_1)(\alpha(m_2)).\]
    Without loss of generality, we need only check this condition 
on generators. Assume that $a_1=z$ 
(hence $\beta(a_1)=\beta(z)=z'$). 
It therefore suffices to check that
    \begin{equation}\label{extension_isom}
        \alpha(z(y))=z'(\alpha(y))
    \end{equation}
    for every generator $y$ of $M$. We may assume that either $y \in 
    \{ r^q,\phi^q \}$, or the order of $y$ is not divisible by $p$. 
If $y \in 
    \{ r^q,\phi^q \}$, then $z(y)=y^d$ for some $1 \leq d \leq p-1$. 
Since $\alpha(P)=P'$ and each of $P$ and $P'$ is abelian, 
(\ref{extension_isom}) holds.
If the order of $y$ is not divisible by $p$, 
then $z$ and $z'$ act trivially on $y$ and $\alpha(y)$ respectively, 
so (\ref{extension_isom}) again holds.

\else 

    We first prove Claim 1.
    Let $z_j,z_j'$ be such that $A_j=\langle z_j \rangle$ and $A_{j'}=\langle z_{j'} \rangle$, and let $\alpha:M_j \to M_{j'}$ and $\beta:A_j \to A_{j'}$ be isomorphisms with $\beta(z_j)=z_{j'}$. Define the map $f:M_j \rtimes A_j \to M_{j'}\rtimes A_{j'}$ by
    \[f((m_j,a_j))=(\alpha(m_j),\beta(a_j)).\]
    We show that $f$ is an isomorphism. 
    Clearly $f$ is bijective, so we need only check that
    \[f((m_1,a_1)(m_2,a_2))=f((m_1,a_1))f((m_2,a_2)),\]
or equivalently
    \[(\alpha(m_1a_1(m_2)),\beta(a_1a_2))=(\alpha(m_1)\beta(a_1)(\alpha(m_2)),\beta(a_1)\beta(a_2))\]
    for all $m_1,m_2 \in M_j$ and all $a_1,a_2 \in A_j$. As $\alpha$ and $\beta$ are isomorphisms, this is the same as checking that 
    \[\alpha(a_1(m_2))=\beta(a_1)(\alpha(m_2)).\]
    Without loss of generality, we need only check this condition on generators. So we may assume that $a_1=z_j$ (hence $\beta(a_1)=\beta(z_j)=z_{j'}$), and write $m_2=y_j$ for an arbitrary generator of $M_j$. It therefore suffices to check that
    \begin{equation}\label{extension_isom}
        \alpha(z_j(y_j))=z_{j'}(\alpha(y_j))
    \end{equation}
    for every generator $y_j$ of $M_j$. We may also assume that either $y_j \in 
    \{ r_j^q,\phi_j^q \}$, or the order of $y_j$ is not divisible by $p$. 
If $y_j \in 
    \{ r_j^q,\phi_j^q \}$, then $z_j(y_j)=y_j^d$ for some $1 \leq d \leq p-1$. 
Since $\alpha(P_j)=P_{j'}$ and each of $P_j$ and $P_{j'}$ is abelian, 
(\ref{extension_isom}) holds.
If the order of $y_j$ is not divisible by $p$, 
then $z_j$ and $z_{j'}$ act trivially on $y_j$ and $\alpha(y_j)$ respectively, 
so (\ref{extension_isom}) again holds.
\fi 

    We now prove Claim 2. Note that $\langle x_5,s_5,\psi_{b,5} \rangle \leq \Hol(N_5)$ and $\langle s_6, \psi_{b,6} \rangle \leq \Hol(N_6)$. Each 
    subgroup is isomorphic to $C_{2q} \times C_q$ and
    is transitive on $\langle z_5,s_5 \rangle$ and $\langle s_6 \rangle$ 
respectively (note that each is isomorphic to $N_j/\langle r_j \rangle$). 
We claim that $\Hol(N_4)$
    has no such subgroup. For a contradiction, assume that $M_4 \leq
    \Hol(N_4)$ is transitive and contains a subgroup $J_4
    \cong C_{2q} \times C_q$ which is transitive on $\langle
    r_4^p,s_4\rangle \cong N_4/\langle r_4^q \rangle$. Without loss of
    generality, we may assume that $r_4^q,\phi_4^q \notin J_4$, as
    they are generators of $M_4$ of order $p$. To simplify notation, 
    we relabel $r_4^p$ as $r_4$ and $\phi_4^p$ as $\phi_4$.
    An element of $J_4$ of order $q$ has the form
    $(r_4^a,\phi_4^b\psi_{b_p,4}^c)$. We claim that two such elements,
    say $(r_4^a,\phi_4^b\psi_{b_p,4}^c)$ and 
$(r_4^{a'},\phi_4^{b'}\psi_{b_p,4}^{c'})$,
    commute if and only if one is a power of the
    other, or $c=c'=0$. Consider the equations
    \begin{align*}
        &a(1-b_p^{c'})=a'(1-b_p^c)\\ 
        &b(1-b_p^{c'})=b'(1-b_p^c)
    \end{align*}
    which must be satisfied. If $c=0$ but $c'\neq 0$, then 
    $a=b=c=0$, which contradicts the fact that
    $(r_4^a,\phi_4^b\psi_{b_p,4}^c)$ is an element of order $q$. A
    similar argument holds for $c \neq 0$ and $c'=0$. 
If both $c$ and $c'$ are non-zero, then 
    \[(r_4^a,\phi_4^b\psi_{b_p,4}^c)^x=(r_4^{aX},\phi_4^{bX}\psi_{b_p,4}^{cx}),\]
    where $X=(1-b_p^{cx})/(1-b_p^c)$. If $x=c'/c$, then 
    $(r_4^a,\phi_4^b\psi_{b_p,4}^c)^x=(r_4^{a'},\phi_4^{b'}\psi_{b_p,4}^{c'})$.
 The subgroup of $J_4$ isomorphic to $C_q \times C_q$ must therefore be
    generated by $(r_4^{a_1},\phi_4^{b_1})$ and $(r_4^{a_2},\phi_4^{b_2})$ for
    some $0 \leq a_1,a_2,b_1,b_2 \leq p-1$. 
An element $y_4$ of $M_4$ of order 2 has the form
    $(r_4^as_4^b,\phi_4^c\psi_{b_p,4}^d\psi_{b_q,4}^e)$, where
    $a,b,c,d,e$ are chosen so that $y_4^2=1_{J_4}$. 
In particular, $b=1$, otherwise $J_4$ is not transitive 
on $\langle r_4,s_4 \rangle$. The relations needed for $y_4$ to commute 
with $(r_4^{a_i},\phi_4^{b_i})$ for $i \in \{1,2\}$ imply 
the following equations:
    \begin{align}
        &a_i(1+b_q^e)+b_i=0\label{rel1}\\ 
        &b_i(1-b_q^e)=0.\label{rel2}
    \end{align}
    From (\ref{rel2}), either $b_i=0$ or $e=0$. Suppose first that $b_1=0$. 
Note that $a_1 \not=0$, otherwise $(r_4^{a_1},\phi_4^{b_1})$ is trivial and 
so does not have order
    $q$. In particular, we may assume that $(a_1,b_1)=(1,0)$ and 
$(a_2,b_2)=(0,1)$, and deduce that 
    \begin{align*}
        &a_1(1+b_q^e)=0 \text{ by (\ref{rel1})},\\ 
        &b_2=0 \text{ by (\ref{rel1}), and} \\ 
        &b_2(1-b_q^e)=0 \text{ by (\ref{rel2})}.
    \end{align*}
    But this gives a contradiction: $1=b_2=0$. Hence $b_1 \neq 0$ and 
$b_2 \neq 0$, and so $e=0$. 
This implies that 
    \[2a_i+b_i=0\]
    for $i \in \{1,2\}$, so 
$(r_4^{a_i},\phi_4^{b_i})=(r_4,\phi_4^{-2})^{a_i}$. Hence 
one generator is a power of the other. Thus
$$\langle (r_4^{a_1},\phi_4^{b_1}),(r_4^{a_2},\phi_4^{b_2}) 
\rangle=\langle(r_4,\psi_4^{-2})\rangle \not\cong C_q \times C_q,$$
a contradiction. Hence $\Hol(N_4)$ has no transitive subgroup 
isomorphic to $C_{2q} \times C_q$, so no transitive subgroup of 
$\Hol(N_4)$ is isomorphic to either $\Hol(N_5)$ or $\Hol(N_6)$. 
Therefore $|G|$ divides $2pq(p-1)$.

We now prove the converse. Consider the following subgroups of order $2pq(p-1)$:
    \begin{align}
    \begin{split}\label{2pq(p-1)_grps}
        M_1 & = \langle x_1, \sigma_1 \rangle \leq\Hol(N_1)\\ 
        M_2 & = \langle x_2,r_2,(s_2,\psi_{-1,2}),\sigma_2 \rangle \leq \Hol(N_2)\\ 
        M_3 & = \langle x_3,(r_3,\phi_3^{-2}),s_3,\psi_{b,3} \rangle \leq \Hol(N_3)\\ 
        M_4 & = \langle (r_4,\phi_4^{-2}),s_4,\psi_{b_p,4} \rangle \leq \Hol(N_4)\\ 
        M_5 & = \langle x_5,s_5, (r_5,\psi_{k^{-1},5}), \psi_{b,5} \rangle \leq \Hol(N_5)\\ 
        M_6 & = \langle s_6,(r_6,\psi_{k^{-1},6}),\psi_{b,6} \rangle \leq \Hol(N_6).
    \end{split}
    \end{align}
    Each $M_i$ is isomorphic to $\C_{2q} \times (\C_p \rtimes \C_{p-1})$, and is a
    transitive subgroup of $\Hol(N_i)$ for $1 \leq i \leq 6$. In each
    case, the stabiliser of $1_{N_i}$ is either $\langle \sigma_i
    \rangle$, $\langle \psi_{b,i} \rangle$, or $\langle \psi_{b_p,4} \rangle$; so
    the $M_i$ are pairwise permutation isomorphic.
Hence, if $M \leq M_i$ is a transitive subgroup of
    $\Hol(N_i)$, then $M$ also embeds transitively in $\Hol(N_j)$ for
    $1 \leq j \leq 6$.
    
    For $i \in \{5,6\}$, we claim that each 
   transitive subgroup of $\Hol(N_i)$ having
    order $2pq(p-1)$ is isomorphic to $\C_{2q} \times (\C_p \rtimes
    \C_{p-1})$. Assume this claim. If $H$ is a transitive subgroup of
    $\Hol(N_i)$ of order $2pqd$ (where $d$ is a divisor of $p-1$),
    then we can extend $H$ to a subgroup of order $2pq(p-1)$ by adding the
    appropriate generators of $\Aut(N_i)$ to $H$. 
    Hence, by the claim, each transitive subgroup $H$ of $\Hol(N_i)$ of
    order dividing $2pq(p-1)$ 
    must be a subgroup of a group isomorphic to
    $\C_{2q} \times (\C_p \rtimes \C_{p-1})$. Further, by
    (\ref{2pq(p-1)_grps}), we deduce that $H$ embeds transitively
    in $\Hol(N_j)$ for $1 \leq j \leq 6$.

    We prove the claim for $i=5$; the argument for $i=6$ is similar.
    Let $M$ be a transitive subgroup of $\Hol(N_5)$ of order
    $2pq(p-1)$. Every Hall $p'$-subgroup of $M$ is conjugate
    to $\langle x_5,s_5,\psi_{b,5} \rangle$, and every Sylow
    $p$-subgroup of $M$ is contained in $\langle r_5, \phi_5
    \rangle$. If $M$ is transitive, then it must contain
    $(r_5,\phi_5^u)$ for some $0 \leq u \leq p-1$. Therefore, up to conjugacy,
    $M=\langle (r_5,\phi_5^u), x_5,s_5, \psi_{b,5} \rangle$.  But 
    $s_5(r_5,\phi_5^u)s_5^{-1}=(r_5^{k-u},\phi_5^u)$, and 
hence $r_5^{k-(u+1)}=(r_5^{k-u},\phi_5^u)(r_5,\phi_5^u)^{-1}$. 
If $u \neq k-1$, then $p^2$ divides $|M|$, a contradiction. 
If $u=k-1$, then $M$ is isomorphic to $\C_{2q} \times (\C_p \rtimes \C_{p-1})$.
\end{proof}

\section{Observations and a conjecture}
We observe a few (non-trivial) patterns in
the total number of Hopf--Galois structures on either separable or
Galois extensions (or their relation to skew bracoids and skew braces). 
As one example, consider $n$ where $\gcd(n,\varphi(n))=1$. 
Every group $N$ of order $n$ is cyclic. Byott \cite{Byo96} observed that 
there is a unique Hopf--Galois structure associated to the unique 
transitive subgroup of $\Hol(N)$ of every relevant order. Therefore
the number of (equivalence classes) of skew braces of degree $n$ 
equals the number of Hopf--Galois structures on separable extensions 
of degree $n$.

As we demonstrate in Table \ref{tab:patterns},  
our data for degrees 44, 92, 188 and 236 (all of the form
$4p$ where $p \equiv 11 \bmod 12$) exhibits interesting 
patterns; as does that for the square-free degrees 30, 70, 190 and 230 (all of the form $10p$ where $p \equiv 3 \bmod{4}$ and $p \not\equiv 1 \bmod{5}$).
The number of skew braces agrees with that proved for such 
orders in \cite{AB22} and \cite[\S 7.2]{AB21}.

\begin{table}[htp]
\centering
\small
\begin{tabular}{|l|l|l|l|l|l|l|l|l|}
\hline
Degree & Types & \#HGS & \#Sbracoids & \#Gal & \#Sbraces & \multicolumn{2}{l|}{Almost classical} & \#BC \\
       &       &       &             &       &           & \#HGS & \#Sbracoids & HGS \\
\hline

44 & 4 & 466 & 200 & 184 & 29 & 82 & 70 & 141 \\
92 & 4 & 706 & 200 & 352 & 29 & 82 & 70 & 141 \\
188 & 4 & 1186 & 200 & 688 & 29 & 82 & 70 & 141 \\
236 & 4 & 1426 & 200 & 856 & 29 & 82 & 70 & 141 \\

\hline
\hline
30 & 4 & 479 & 304 & 80 & 36 & 99 & 72 & 197 \\
70 & 4 & 1012 & 608 & 120 & 36 & 198 & 144 & 411 \\
190 & 4 & 1761 & 912 & 240 & 36 & 297 & 216 & 625 \\
230 & 4 & 1444 & 608 & 280 & 36 & 198 & 144 & 411 \\
\hline
\end{tabular}
\caption{Patterns among certain degrees}
\label{tab:patterns}
\end{table}

When we consider the number of structures admitted by specific
types, other patterns emerge. We formulate one 
observation as a conjecture.
\begin{conjecture}
    Let $N$ be a non-abelian finite simple group. The almost
    classically Galois Hopf--Galois structures of type $N$ are exactly
    the Hopf--Galois structures which admit a bijective correspondence.
\end{conjecture}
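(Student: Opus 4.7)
The plan is to prove both implications, translating throughout into the language of transitive subgroups $G\le\Hol(N)=N\rtimes\Aut(N)$ with stabiliser $G'=\mathrm{Stab}_G(1_N)$. By the discussion at the end of Section~\ref{HGS}, a Hopf--Galois structure is almost classically Galois exactly when $\rho(N)\le G$. Let $c\colon\Hol(N)\to\Aut(N)$ be the conjugation-on-$\lambda(N)$ map $(\eta,\alpha)\mapsto\mathrm{Inn}(\eta)\alpha$, so that $\ker c=\rho(N)$ and $c(G)\supseteq G'$ always. A direct calculation shows that $\rho(N)\le G$ if and only if $c(G)=G'$: one direction follows from $c(\rho(N))=\{1\}$, and for the other, if $c(G)=G'$ then every $(\eta,\alpha)\in G$ has the form $\rho(\eta)\cdot(1,g')$ with $g'\in G'$, forcing $\rho(\eta)\in G$.

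For the easy direction, assume $\rho(N)\le G$, so $G=\rho(N)\rtimes G'$. Subgroups $H$ with $G'\le H\le G$ are in bijection with the $G'$-stable subgroups of $\rho(N)\cong N$ via $H\mapsto H\cap\rho(N)$, while the $G$-normalised subgroups of $\lambda(N)\cong N$ are the $c(G)=G'$-stable subgroups of $N$. Both lattices are indexed by the same set of subgroups of $N$, and the injectivity of the Hopf--Galois correspondence upgrades this matching count to a bijection. Note that this direction does not actually use that $N$ is simple.

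For the hard direction, assume the correspondence is bijective and argue that $c(G)=G'$. The key lemma, where simplicity enters, is: for $A\le\Aut(N)$, the $A$-stable subgroups of $N$ reduce to $\{1,N\}$ if and only if $A\supseteq\mathrm{Inn}(N)$. The ``if'' direction is immediate, since $\mathrm{Inn}(N)$-stable equals normal in $N$, which for $N$ simple is $\{1,N\}$. For ``only if'', if $A\cap\mathrm{Inn}(N)$ is proper and non-trivial inside $\mathrm{Inn}(N)\cong N$ it provides a further $A$-stable subgroup (being normal in $A$); otherwise $A$ embeds into $\mathrm{Out}(N)$, which is solvable by Schreier's conjecture, and Thompson's theorem on prime-order fixed-point-free automorphisms of non-nilpotent groups produces a proper non-trivial fixed subgroup for some $\phi\in A$ of prime order, from which an $A$-stable subgroup is extracted by intersecting $A$-conjugates of this fixed subgroup.

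With the lemma in hand, split by whether $\lambda(N)\le G$. If $\lambda(N)\le G$, then $G=\lambda(N)\rtimes G'$, $c(G)=\mathrm{Inn}(N)\cdot G'\supseteq\mathrm{Inn}(N)$, and so there are exactly two Hopf subalgebras; bijectivity forces exactly two intermediate fields, hence two $G'$-stable subgroups of $\lambda(N)\cong N$, hence $G'\supseteq\mathrm{Inn}(N)$ by the lemma, and then $\rho(\eta)=(\eta,\mathrm{Inn}(\eta^{-1}))\in\lambda(N)\rtimes G'=G$ for every $\eta\in N$, giving $\rho(N)\le G$. If $\lambda(N)\not\le G$, a Goursat-lemma analysis of $G\cap\lambda(N)\rho(N)\le\lambda(N)\times\rho(N)\cong N\times N^{opp}$, together with the transitivity of $G$ and the simplicity of $N$ (which forces the normal factors appearing in Goursat to lie in $\{1,N\}$), again forces $\rho(N)\le G$. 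The main obstacle is precisely this last case: ruling out ``diagonal'' transitive subgroups of $\Hol(N)$ that avoid both $\lambda(N)$ and $\rho(N)$ requires delicate structural input from the classification of finite simple groups (Schreier) together with Thompson's theorem, but once these tools are available the combinatorial comparison between the two lattices is routine.
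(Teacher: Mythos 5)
This statement is a \emph{conjecture} in the paper: the author gives no proof, only computational evidence (and note that within the computed range of degrees the only non-abelian simple group is $A_5$, so the evidence is essentially a single data point) together with a heuristic remark about $\mathrm{Out}(N)$ being small. So there is no proof in the paper to compare yours against; the question is whether your argument closes the conjecture, and it does not.

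Your translation into transitive subgroups $G\le\Hol(N)$ with $G'=G\cap\Aut(N)$, the identification of ``almost classically Galois'' with $\rho(N)\le G$, and the equivalence $\rho(N)\le G\iff c(G)=G'$ are all correct. The easy direction is also correct, but it is not new: the paper already records in Section 2.2 that an almost classically Galois structure admits a bijective correspondence for arbitrary $N$, so the entire content of the conjecture is your ``hard direction''. There the argument has two genuine gaps. First, in the key lemma, when $A\cap\mathrm{Inn}(N)=1$ you take $\phi\in A$ of prime order, get $1\ne C_N(\phi)\ne N$ from Thompson, and then ``intersect $A$-conjugates''. But $a\bigl(C_N(\phi)\bigr)=C_N(a\phi a^{-1})$, so the intersection over $a\in A$ is $C_N\bigl(\langle\phi\rangle^A\bigr)$, which can perfectly well be trivial when $A$ is non-abelian (and $\mathrm{Out}(N)$ is non-abelian for many families, e.g.\ various $\PSL_n(q)$ and orthogonal groups). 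The lemma itself appears to be true, but a correct proof seems to need something like: take a maximal subgroup $M$ of $\mathrm{Inn}(N)A$ containing $A$, observe $M\cap\mathrm{Inn}(N)$ is $A$-stable and proper, and rule out $M\cap\mathrm{Inn}(N)=1$ via the O'Nan--Scott theorem (a non-abelian simple socle cannot be regular in a primitive group) --- a different and heavier argument than the one you give. Second, and more seriously, the case $\lambda(N)\not\le G$ is not actually proved. Goursat applied to $D:=G\cap\lambda(N)\rho(N)\trianglelefteq G$ only tells you that the projections and intersections $G\cap\lambda(N)$, $G\cap\rho(N)$ are $c(G)$-stable subgroups of $N$; simplicity of $N$ does \emph{not} force these into $\{1,N\}$ --- that would require $c(G)\supseteq\mathrm{Inn}(N)$, which is essentially what you are trying to establish. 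Moreover, bijectivity of the correspondence only says the two counts agree, not that either lattice has exactly two elements, so the concluding ``combinatorial comparison between the two lattices'' is not routine even granting structural control of $D$. Your own text concedes this case is ``the main obstacle''; it is precisely where the difficulty of the conjecture lives, so the conjecture remains open after your argument.
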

Martin-Lyons and Truman \cite[Theorem 5.9]{MLT24} demonstrate a 
correspondence between the ideals of a skew bracoid and the intermediate 
fields which are realisable with respect to the associated Hopf--Galois 
structure. This correspondence and the observation that the outer automorphism 
group of a finite simple group is small both support our conjecture.


\section*{Acknowledgements}
Darlington  thanks Roberto Civino, Sam Hodgkinson, 
and Leandro Vendramin for their insight, questions, and familiarity with {\sc Magma} and Python which motivated this paper and greatly assisted with earlier versions of this work.
He was supported by 
the Engineering and Physical Sciences Doctoral Training Partnership
research grant EP/T518049/1 (EPSRC DTP) and 
Project OZR3762 of Vrije Universiteit Brussel and FWO Senior Research
Project G004124N.

O’Brien was supported by the Marsden Fund of New Zealand Grant 23-UOA-080.
\bibliography{MyBib}
\end{document}